\newtheorem{thm}{Theorem}[section]
\newtheorem{prop}{Proposition}[section]
\newtheorem{lem}{Lemma}[section]
\newtheorem{rem}{Remark}[section]
\theoremstyle{definition}
\numberwithin{equation}{section}
\newcommand{\pp}{\mathbb{P}}
\newcommand{\nn}{\mathbb {N}}
\newcommand{\ee}{\mathbb{E}}
\newcommand{\FF}{\mathcal{F}}
\newcommand{\rr}{\mathbb{R}}
\newcommand{\ii}{\mathbb{I}}
\def\beq{\begin{equation}}
\def\deq{\end{equation}}
\def\dsp{\displaystyle}
\begin{document}
\title[Large deviation inequalities for urn model]
{Large deviation inequalities for the nonlinear unbalanced urn model}
\thanks{This work is supported by National Natural Science Foundation of China (NSFC-11971154).}

\author[J. N. Shi]{Jianan Shi}
\address[J. N. Shi]{College of Mathematics and Information Science, Henan Normal University, Henan Province, 453007, China.}
\email{\href{mailto: J. N. Shi
<jiananshi2022@126.com>}{jiananshi2022@126.com}}

\author[Z. H. Yu]{Zhenhong Yu}
\address[Z. H. Yu]{College of Mathematics and Information Science, Henan Normal University, Henan Province, 453007, China.}
\email{\href{mailto: Z. H. Yu
<zhenhongyu2022@126.com>}{zhenhongyu2022@126.com}}

\author[Y. Miao]{Yu Miao}
\address[Y. Miao]{College of Mathematics and Information Science, Henan Normal University, Henan Province, 453007, China; Henan Engineering Laboratory for Big Data Statistical Analysis and Optimal Control, Henan Normal University, Henan Province, 453007, China.} \email{\href{mailto: Y. Miao
<yumiao728@gmail.com>}{yumiao728@gmail.com}; \href{mailto: Y. Miao <yumiao728@126.com>}{yumiao728@126.com}}

\begin{abstract}
In the present paper, we consider the two-color nonlinear unbalanced urn model, under a drawing rule reinforced by an $\rr^+$-valued concave function and an unbalanced replacement matrix.
The large deviation inequalities for the nonlinear unbalanced urn model are established by using the stochastic approximation theory. As an auxiliary theory, we give a specific large deviation inequality for a general stochastic approximation algorithm.
\end{abstract}

\keywords{Unbalance urn model; large deviations; stochastic approximation.}
\subjclass[2020]{60F10, 62L20}

\maketitle
\section{Introduction}
The urn model is a widely used mathematical tool across various disciplines such as algorithms, genetics, epidemiology, physics, engineering, economics, and social networks, serving to model diverse evolutionary processes. The earliest urn model could date back to the P\'olya urn model, which is to draw a ball from an
urn containing initially two balls of different colors and then put it back in the urn along with another of the same color as the sampled one. Eggenberger and P\'olya \cite{E-P} studied the opposite extraction policy, i.e., returning the drawn ball along with another of the opposite color. Friedman \cite{F-B} further considered the following problem. An urn contains $W$ white and $B$ black balls. A ball is drawn at random and then $1+\alpha$ additional balls of the same color and $\beta$ additional balls of the opposite color are added. After this procedure has been repeated $n$ times what is the probability distribution of the number of white balls? The problem was reduced to a difference-differential equation.
 Johnson and Kotz \cite{N-S} introduced many urn models which involve stochastic replacement of balls and the irreplaceable occupancy problems, and gave some applications to various fields of science in the framework of statistics and probability. As a compact sequel to the book, Kotz and Balakrishnan \cite{S-B} surveyed some key developments that have occurred pertaining to urn models in probabilistic, statistical and biological literatures. Mahmoud \cite{H} presented an extensive treatment of urn models. Except for a few simple motivating examples, the focus is on the P\'olya urn models (or schemes), describing urns containing a finite number of coloured balls that are repeatedly selected and possibly replaced in the containers according to a pre-determined strategy.

 In the P\'olya urn models, there is a fundamental commonality across most variants. Initially, the urn contains $W_0$ white balls and $B_0$ black balls. The evolution of the urn proceeds in discrete time steps. During each step, a ball is randomly chosen from the urn. After noting its color, the ball is returned to the urn. According to the observed color of the ball, balls are
added due to the following rules. If a white ball is drawn, $a$ white balls and $b$ black balls are added, and if a black ball is drawn, $c$ white balls and $d$ black balls are added.  The values $a, b, c,$ and $d$ are integers and the urn model is specified by the $2\times2$ replacement matrix
$$
A:= \begin{pmatrix}
a & c\\
b & d
\end{pmatrix}
.
$$
The most fundamental urn model is the original P\'olya-Eggenberger urn model, associated with the ball replacement matrix
 $$
A:= \begin{pmatrix}
1 & 0\\
0 & 1
\end{pmatrix}
.
$$
The P\'olya-Eggenberger urn is a balanced urn model, i.e.,  the total number of added
balls is constant and so independent of the observed color. Chen and Wei \cite{C-W05} introduced a urn model in which more than one ball is drawn randomly each time, generalizing the P\'olya-Eggenberger urn model. Chen and Kuba \cite{C-K13} studied the above general urn model and provided exact expressions for the expectation and the variance of the number of white balls after $n$ draws, and determine the structure of higher moment. Kuba and Mahmoud \cite{K-M} considered a class of balanced urn schemes on balls of two colors (say white and black). At each drawing, a sample of size $m\ge 1$ is taken out from the urn, and ball addition rules are applied. They obtained central limit theorems for small and critical-index urns and prove almost-sure convergence for triangular and large-index urns. These urn models are linear in the following sense:
$$
\ee(W_n | \FF_{n-1}) = \varphi_n W_{n-1} + \nu_n,
$$
where $(\FF_n)_{n\ge 0}$ is the $\sigma$-algebra generated by the first $n$ steps, $W_n$ represents the number of white balls at time $n$ in an urn initially containing $W_0$ white balls and $B_0$ black balls, $\varphi_n$  and $\nu_n$ are two sequences dependent solely on the number of draws.
Aguech et al. \cite{A-1} and Aguech and Selmi \cite{A-2} studied a linear unbalanced urn model with multiple drawing. At each discrete time step $n$, one draws $m$ balls at random from an urn containing white and blue balls. The replacement of the balls follows either opposite or self-reinforcing rules. The authors obtained a strong law of large numbers and a central limit theorem for $W_n$, the number of white balls after $n$ draws.

Laruelle and Pag\'es \cite{L-P} presented a class of nonlinear urn models, where the drawing rule is neither uniform nor hypergeometric, but reinforced by a function $f:\rr^+\rightarrow \rr^+$ such that, the probability of drawing a ball of type $i$ obeys the following rule:
$$
\pp(X_{n+1}=e_i|\FF_n)=\frac{f(Y^i_n)}{\sum_{j=1}^d f(Y^j_n)},\ n\ge0,
$$
where $Y^i_n$ represents the count of balls of color $i$ in a $d$-colored urn at time $n$ and $(\FF_n)_{n\ge 0}$
is the $\sigma$-algebra generated by the first $n$ draws, and $X_n:(\Omega,\mathcal{A}, \pp)\rightarrow \{e_1, \ldots, e_d\}$ denotes the type of the drawn ball at time $n$. This kind of drawing rules is called nonlinear drawing rule. Note that, if $f\equiv id_{\rr^+}$ the model boils down to a classic single $d$-colored urn. Idriss \cite{IS} studied a nonlinear unbalanced urn model, which adopted the model proposed in Laruelle and Pag\'es \cite{L-P} and used the non-balance assumption introduced in Aguech et al. \cite{A-1} and Aguech and Selmi \cite{A-2}.

 Recently stochastic approximation methods are used to obtain asymptotic results on the urn models.
   The theory of stochastic approximation was initially introduced by Robbins and Monro \cite{R-M} to solve equations of the form $M(x)=\alpha$. Renlund \cite{R-1} and Renlund \cite{R-2} developed a stochastic approximation algorithm tailored for generalized P\'olya urn models and derived asymptotic results within the framework of stochastic approximation theory. Idriss \cite{IS} obtained the central limit theorem and the almost sure convergence of the nonlinear unbalanced urn model by using the methods in Renlund \cite{R-1} and Renlund \cite{R-2}.
   In the present paper, we continue to consider the nonlinear unbalanced urn model, which was derived from Idriss \cite{IS}, and obtain a large deviation inequality for the urn model. As an auxiliary theory, we give a specific large deviation inequality for a general stochastic approximation algorithm.
 Our proof methodology was influenced by Koml\'os and R\'ev\'esz \cite{K-R}, who established the convergence rate of the stochastic approximation algorithm. In Section 2, we state the nonlinear unbalanced urn model and give the main results, and some preliminary lemmas and the proofs of these main results are given in Section 3.

\section{Main results}

\subsection{Nonlinear unbalanced urn model}
 We consider an urn containing two types of balls: type $1$ and type $2$. All random variables pertinent to this model are defined on a probability space $(\Omega,\mathcal{A}, \pp)$.
Initially, the urn contains $Y^1_0\in \nn^+$ balls of type $1$ and $Y^2_0\in \nn^+$ balls of type $2$, says $T_0=Y^1_0+Y^2_0$ as initial total of balls.
If the drawn ball is of type $j\in \{1,2\}$, the urn is updated by adding $H^{ij}$ balls for every $i\in \{1,2\}$. In order to make this process to be
repeated safely, we require the urn to be tenable, meaning it never dies, for this we set the
replacement matrix $H$ with non negative entries as follows:
\beq\label{m-1}
H:= \begin{pmatrix}
H^{11} & H^{12}\\
H^{21} & H^{22}
\end{pmatrix}
,\ H^{ij}\ge 0.
\deq
 Let us define $H_1:=H^{11}+H^{21}$ and $H_2:=H^{12}+H^{22}$, and assume $H_1\neq H_2$, which indicates that the replacement matrix is no longer balanced.
    Let $(Y_n)=(Y^1_n,Y^2_n)^{'} \in \nn^2 \backslash \{0\},$ denote the composition of the urn at stage $n$, where $T_n:=Y^1_n+Y^2_n$ represents the total number of balls at the stage $n$.
 Let $(\FF_n)_{n\ge 0}$ be the $\sigma$-algebra generated by the first $n$ steps, and the urn model is represented by the following normalized skewed drawing rule:
\beq\label{1}
\pp (X_{n+1}=e_i | \FF_n)=\frac{f(\frac{Y^i}{T_n})}{f(\frac{Y^1}{T_n})+f(\frac{Y^2}{T_n})},\ \ i\in \{1,2\},
\deq
 where $f:[0,1]\rightarrow \rr^+$ is a skewing function satisfying some the properties and $X_n:(\Omega,\mathcal{A}, \pp)\rightarrow \{e_1, e_2\}$ represents the type of the drawn ball at time $n$.

 We use the following stochastic recursion to describe the evolution of the urn model:
\beq\label{2}
Y_{n+1} = Y_n+H X_{n+1}, \ \ \ Y_0\in \nn^2\ \backslash \{0\},
\deq
which means
$$
Y^1_{n+1}=Y^1_n+\sum_{j=1}^2 H^{1j}\ii_{\{X_{n+1}=e_j\}}
$$
and
$$
Y^2_{n+1}=Y^2_n+\sum_{j=1}^2 H^{2j}\ii_{\{X_{n+1}=e_j\}}.
$$
The total number of balls is given by
\beq\label{3}
T_{n+1}= T_n+\sum_{i=1}^2 \sum_{j=1}^2 H^{ij} \ii_{\{X_{n+1}=e_j\}}.
\deq
Here we remark that if $H_1=H_2=1$, then we get
$$
\aligned
T_{n+1}=& T_n+\sum_{i=1}^2 \sum_{j=1}^2 H^{ij} \ii_{\{X_{n+1}=e_j\}}\\
=&T_n+H_1\ii_{\{X_{n+1}=e_1\}}+H_2\ii_{\{X_{n+1}=e_2\}}\\
=&T_n+1\\
\vdots&\\
=&T_0+n.
\endaligned
$$
\subsection{Assumptions and some known results}
In order to study our results, the following conditions are needed:
\begin{enumerate}[($C_1$)]
 \item Let the function $f: [0,1]\to \rr^+$ be continuous, non-decreasing, differentiable, $f(0)=0$, $f(1)=1$, $f>0$ on $(0,1]$, with finite right derivative at $0$ and left derivative at $1$. We will denote by $f^{'}(0)$ and $f^{'}(1)$ the right and left derivatives of $f$ at $0$ and $1$.

 \item The matrix $H$ is not balanced, which means that $H_1\ne H_2$.

 \item The urn scheme is tenable. It means that as long as the process is running, the urn is
never empty.

\item The entries of the matrix $H$ are neither all null nor negative. Moreover, the entries
$H^{12}$ and $H^{21}$ are strictly positive.
    \end{enumerate}

In order to obtain our main result, let $Z_n:=\frac{Y^1_n}{T_n}$ represent the proportion of balls of type 1 in the urn. So for $Z_0\in(0,1]$ we have
\begin{align}\label{a-1}
Z_{n+1}-Z_n&=\frac{Y_{n+1}^1}{T_{n+1}}-\frac{Y_{n}^1}{T_{n}}\nonumber\\
&=\frac{1}{T_{n+1}}\left(Y_n^1+\sum_{j=1}^2 H^{1j}\ii_{\{X_{n+1}=e_j\}}-\left(T_n+\sum_{i=1}^2 \sum_{j=1}^2 H^{ij} \ii_{\{X_{n+1}=e_j\}}\right)\frac{Y_{n}^1}{T_{n}}\right)\nonumber\\
&=\frac{1}{T_{n+1}}\left(\sum_{j=1}^2 H^{1j}\ii_{\{X_{n+1}=e_j\}}-\left(\sum_{i=1}^2 \sum_{j=1}^2 H^{ij} \ii_{\{X_{n+1}=e_j\}}\right)Z_n\right)\nonumber\\
&=\frac{1}{T_{n+1}}\left(\left(H^{11}-Z_nH_1 \right)\ii_{\{X_{n+1}=e_1\}}+\left(H^{12}-Z_nH_2 \right)\ii_{\{X_{n+1}=e_2\}}\right)\nonumber\\
&=\frac{L_{n+1}}{T_{n+1}},
\end{align}
where
$$
L_{n+1}=(H^{11}-Z_nH_1)\ii_{\{X_{n+1}=e_1\}}+(H^{12}-Z_nH_2)\ii_{\{X_{n+1}=e_2\}}.
$$
It is easy to check that
\begin{align*}
\ee[L_{n+1}|\FF_n]&=\ee\left[(H^{11}-Z_nH_1)\ii_{\{X_{n+1}=e_1\}}+(H^{12}-Z_nH_2)\ii_{\{X_{n+1}=e_2\}}|\FF_n\right]\\
&=\frac{(H^{11}-Z_nH_1)f(Z_n)+(H^{12}-Z_nH_2)f(1-Z_n)}{f(Z_n)+f(1-Z_n)},
\end{align*}
then we have
\begin{align*}
Z_{n+1}-Z_n=&\frac{\ee[L_{n+1}|\FF_n]+L_{n+1}-\ee[L_{n+1}|\FF_n]}{T_{n+1}}\\
&=\frac{1}{T_{n+1}} (h(Z_n)+\Delta M_{n+1}),
\end{align*}
where
\beq\label{b-5}
\Delta M_{n+1}=L_{n+1}-\ee[L_{n+1}| \FF_n],
\deq
and
\beq\label{c-1}
h(y)=\frac{(H^{11}-yH_1)f(y)+(H^{12}-yH_2)f(1-y)}{f(y)+f(1-y)},\ \ y\in[0,1].
\deq

Let $h$ be as in (\ref{c-1}), and define the sets $\Theta^{*}$ and $I^{*}$ as follows:
$$
\Theta^{*}:=\{y\in[0,1]; h(y)=0\}
$$
and
 \beq\label{I}
 I^{*}:=\left\{\min\left(\frac{H^{11}}{H_1}, \frac{H^{12}}{H_2} \right), \ \max\left(\frac{H^{11}}{H_1}, \frac{H^{12}}{H_2} \right)\right\}.
\deq
$\Theta^{*}$ denotes the set of zeros of the function $h$ and may be called the set of equilibrium points.
From Proposition 2.1 in \cite{IS}, we know that the set $\Theta^{*}$ is non-empty, and if $y^{*}\in \Theta$, then $y^{*}\in I^{*}$.

\begin{prop}\label{prop-i1}\cite[Proposition 2.2]{IS} Under the conditions $(C_1)$, $(C_2)$, $(C_3)$ and $(C_4)$, then we have the following claims.

\begin{enumerate}[$(1)$]
 \item If $\dsp \frac{H^{11}}{H_1}\le  \frac{H^{12}}{H_2}$, then $h$ has a unique solution in $I^{*}$.

 \vskip10pt

 \item If $\dsp \frac{H^{11}}{H_1}> \frac{H^{12}}{H_2}$ and $f$ is concave, then $h$ has a unique solution in $I^{*}$.
    \end{enumerate}
\end{prop}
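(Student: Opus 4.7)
The plan is to reduce everything to the numerator of $h$. Write $a := H^{11}/H_1$ and $b := H^{12}/H_2$, so that $I^{*} = [\min(a,b), \max(a,b)]$ and
\[
g(y) := (H^{11}-yH_1)f(y) + (H^{12}-yH_2)f(1-y) = H_1(a-y)f(y) + H_2(b-y)f(1-y).
\]
By $(C_1)$ and $(C_4)$ the denominator $f(y)+f(1-y)$ is strictly positive on $[0,1]$, so the zeros of $h$ on $I^{*}$ coincide with those of $g$; moreover $(C_4)$ forces $a,b \in (0,1)$, which will be used freely.

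For part~(1), with $a \le b$ and $I^{*} = [a,b]$, I would first evaluate the endpoints: $g(a) = H_2(b-a)f(1-a) \ge 0$ and $g(b) = H_1(a-b)f(b) \le 0$, with strict inequalities whenever $a < b$, so a zero exists by the intermediate value theorem. For uniqueness, I would show $g$ is strictly decreasing on $[a,b]$: since $a-y \le 0$ and $f' \ge 0$ on $[0,1]$,
\[
\frac{d}{dy}\bigl[H_1(a-y)f(y)\bigr] = -H_1 f(y) + H_1(a-y)f'(y) \le -H_1 f(y) < 0,
\]
and symmetrically $\tfrac{d}{dy}[H_2(b-y)f(1-y)] \le -H_2 f(1-y) < 0$ because $b-y \ge 0$. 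Summing gives $g' < 0$ and hence uniqueness. Observe that concavity is not needed here.

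For part~(2), with $a > b$ and $I^{*} = [b,a]$, the endpoint evaluation now gives $g(b) > 0$ and $g(a) < 0$, so existence is again immediate. Uniqueness is the crux: the sign of $g'$ is no longer manifestly negative, so I would instead rewrite $g(y)=0$ on $(b,a)$ as
\[
R(y) := \frac{H_1(a-y)f(y)}{H_2(y-b)f(1-y)} = 1,
\]
where both numerator and denominator are strictly positive, $R(b^{+}) = +\infty$, and $R(a^{-}) = 0$. The plan is to show $R$ is strictly decreasing by computing
\[
(\log R)'(y) = -\frac{1}{a-y} - \frac{1}{y-b} + \frac{f'(y)}{f(y)} + \frac{f'(1-y)}{f(1-y)}.
\]
Here concavity enters: from $f(0)=0$ and the tangent inequality $f(0) \le f(y) + f'(y)(0-y)$ one gets $f'(y)/f(y) \le 1/y$, and analogously $f'(1-y)/f(1-y) \le 1/(1-y)$. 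Coupled with the elementary bounds $1/(a-y) > 1/(1-y)$ (from $a<1$) and $1/(y-b) > 1/y$ (from $b>0$), these force $(\log R)'(y) < 0$, so $R$ is strictly monotone and hits the value $1$ at a single point of $(b,a)$.

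The main obstacle is precisely the uniqueness in part~(2). Without concavity the direct derivative of $g$ can have both signs and extra roots may appear, so the concavity hypothesis must be leveraged somewhere. The cleanest place I see to use it is in the logarithmic derivative of the ratio $R$, where the concavity consequence $y f'(y) \le f(y)$ converts a potentially delicate analytic comparison into a trivial algebraic one between reciprocal-distance terms tied to the endpoints of $(b,a)$.
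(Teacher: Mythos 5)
The present paper does not prove this proposition; it is quoted as \cite[Proposition 2.2]{IS} and used as a black box, so there is no in-paper argument to compare your attempt against. On its own terms, your proof is correct. The reduction to the numerator $g(y)=H_1(a-y)f(y)+H_2(b-y)f(1-y)$, the endpoint sign check, the derivative bound $g'<0$ for part~(1), and the log-derivative analysis of the ratio $R$ in part~(2) all go through, with concavity entering exactly where you say: the tangent inequality at $0$ gives $f'(z)/f(z)\le 1/z$ for $z\in(0,1]$, and then the comparison $1/(a-y)>1/(1-y)$, $1/(y-b)>1/y$ (valid because $a<1$ and $b>0$) forces $(\log R)'<0$.

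Two small imprecisions are worth flagging, neither fatal. First, $(C_4)$ gives only $a\in[0,1)$ and $b\in(0,1]$, not $a,b\in(0,1)$ strictly; $a=0$ is possible if $H^{11}=0$, and $b=1$ if $H^{22}=0$. In part~(2) the hypothesis $a>b$ rescues you (it forces $a>b>0$ and $b<a<1$), and in part~(1) the possible degeneracy never harms the argument. Second, in part~(1) the termwise bound $\frac{d}{dy}\bigl[H_1(a-y)f(y)\bigr]\le -H_1f(y)<0$ fails to be strict at $y=a=0$ (where $f(y)=0$), and likewise the other term at $y=b=1$. What actually holds on all of $[a,b]$ is the summed bound
\[
g'(y)\le -H_1f(y)-H_2f(1-y)<0,
\]
where strictness follows because $H_1f(y)+H_2f(1-y)>0$ on $[0,1]$ (at $y=0$ it equals $H_2$, at $y=1$ it equals $H_1$, and on $(0,1)$ both $f(y),f(1-y)>0$). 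With these cosmetic corrections the proof is complete.
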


The following properties, which are proved in \cite[Proof of Theorem 2.1]{IS}, will be used to obtain the large deviation inequality of $(Z_n)_{n\ge 0}$.

\begin{prop}\label{prop1}
Under the conditions $(C_1)$, $(C_2)$, $(C_3)$ and $(C_4)$, then the following claims hold.
\begin{enumerate}[$(1)$]
 \item For any $n$, we have
\beq\label{b-1}
\frac{1}{2n\max\{H_1, H_2\}}\leq \frac{1}{T_n}\leq\frac{1}{2n\min\{H_1, H_2\}}.
\deq

\item For any $y\in[0,1]$, we have
\beq\label{b-2}
|h(y)|\leq2(H_1+H_2).
\deq

\item For any $n$, we have
\beq\label{b-3}
\mid\Delta M_{n+1}\mid\le 4(H_1+H_2).
\deq

\item There exists a positive constant $K=K(H_1, H_2)$, such that
\beq\label{b-6}
\left|\ee\left(\frac{\Delta M_{n+1}}{T_{n+1}}\  \Bigg|\ \FF_n\right)\right|\le \frac{K}{T^2_n}.
\deq
    \end{enumerate}
\end{prop}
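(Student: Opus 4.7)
The plan is to establish the four items in turn, with (1)--(3) following from elementary bounds on the one-step dynamics and (4) requiring a small but essential cancellation that I isolate below.

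For (1), I use that $T_{n+1}-T_n$ equals exactly $H_1$ or $H_2$, so induction gives $T_0+n\min\{H_1,H_2\}\le T_n\le T_0+n\max\{H_1,H_2\}$; combined with $T_0\ge 2$ this yields the stated bounds on $1/T_n$. For (2), I bound $|H^{1j}-yH_j|\le H^{1j}+yH_j\le 2H_j$ on $[0,1]$, so the numerator of $h(y)$ in \eqref{c-1} is at most $2H_1 f(y)+2H_2 f(1-y)\le 2(H_1+H_2)(f(y)+f(1-y))$, and the claim follows after dividing by $f(y)+f(1-y)$. For (3), the same estimate gives $|L_{n+1}|\le 2(H_1+H_2)$, and since $\Delta M_{n+1}=L_{n+1}-\ee[L_{n+1}\,|\,\FF_n]$, the triangle inequality supplies the factor $4$.

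The main obstacle is (4). A naive absolute-value bound on $\Delta M_{n+1}/T_{n+1}$ would yield only $O(1/T_n)$, whereas the claim demands $O(1/T_n^2)$; the extra factor must come from the fact that $T_{n+1}$ itself depends on $X_{n+1}$ and therefore does not factor out of the conditional expectation. Setting $p=f(Z_n)/(f(Z_n)+f(1-Z_n))$ and $D=H^{11}-H^{12}-Z_n(H_1-H_2)$, a direct computation from \eqref{b-5} using $\ii_{\{X_{n+1}=e_1\}}+\ii_{\{X_{n+1}=e_2\}}=1$ gives
\[
\Delta M_{n+1} \;=\; D\bigl[(1-p)\,\ii_{\{X_{n+1}=e_1\}} \;-\; p\,\ii_{\{X_{n+1}=e_2\}}\bigr].
\]
Since $T_{n+1}=T_n+H_1$ on the first event and $T_n+H_2$ on the second, conditioning yields
\[
\ee\!\left[\frac{\Delta M_{n+1}}{T_{n+1}}\,\bigg|\,\FF_n\right] \;=\; D\,p(1-p)\left(\frac{1}{T_n+H_1}-\frac{1}{T_n+H_2}\right) \;=\; \frac{D\,p(1-p)(H_2-H_1)}{(T_n+H_1)(T_n+H_2)}.
\]
The difference of reciprocals is what supplies the additional $T_n^{-1}$ factor. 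Bounding $|D|\le 2(H_1+H_2)$, $p(1-p)\le 1/4$, and $(T_n+H_1)(T_n+H_2)\ge T_n^2$ then produces (4) with $K=(H_1+H_2)|H_1-H_2|/2$, which indeed depends only on $H_1$ and $H_2$. I expect the identification of this cancellation, rather than any estimation step, to be the only genuinely non-routine part of the argument.
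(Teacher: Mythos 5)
Items (2)--(4) are correct, and item (4) is the genuine content; your cancellation is exactly right. Writing $a=H^{11}-Z_nH_1$, $b=H^{12}-Z_nH_2$, one has $\Delta M_{n+1}=a\,\ii_{\{X_{n+1}=e_1\}}+b\,\ii_{\{X_{n+1}=e_2\}}-\bigl(ap+b(1-p)\bigr)=(a-b)\bigl(\ii_{\{X_{n+1}=e_1\}}-p\bigr)$, and since $\ii_{\{X_{n+1}=e_1\}}-p=(1-p)\,\ii_{\{X_{n+1}=e_1\}}-p\,\ii_{\{X_{n+1}=e_2\}}$, conditioning isolates $\frac{1}{T_n+H_1}-\frac{1}{T_n+H_2}=\frac{H_2-H_1}{(T_n+H_1)(T_n+H_2)}=O(T_n^{-2})$, which is the extra decay a naive $|\Delta M_{n+1}|/T_{n+1}$ bound cannot see. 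Your constant $K=(H_1+H_2)|H_1-H_2|/2$ follows from $|D|\le 2(H_1+H_2)$ and $p(1-p)\le 1/4$. Note that the paper does not give its own proof of this proposition --- it is cited to the proof of Theorem 2.1 in Idriss's paper --- so there is no in-paper argument for your route to be compared against; yours is the self-contained one.

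Item (1), however, has a real gap as you have written it. The correct two-sided inequality coming from $T_n=T_0+\sum_{j=1}^n\Delta T_j$ with $\Delta T_j\in\{H_1,H_2\}$ is $T_0+n\min\{H_1,H_2\}\le T_n\le T_0+n\max\{H_1,H_2\}$, but $T_0\ge 2$ does not upgrade this to $2n\min\{H_1,H_2\}\le T_n\le 2n\max\{H_1,H_2\}$: for $T_0+n\min\{H_1,H_2\}\ge 2n\min\{H_1,H_2\}$ one would need $T_0\ge n\min\{H_1,H_2\}$, which fails once $n>T_0/\min\{H_1,H_2\}$. Concretely, with $T_0=2$, $H_1=1$, $H_2=3$ the total can equal $n+2<2n$ for $n\ge 3$, violating the displayed upper bound on $1/T_n$. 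This is a defect inherited from the proposition's statement rather than from your idea --- only an order-$1/n$ bound is actually consumed downstream, since the constant $B$ in Lemma~\ref{lem-1} need only be some fixed positive number --- but you should either record (1) in the honest form with $T_0+n\min\{H_1,H_2\}$ and $T_0+n\max\{H_1,H_2\}$, or make explicit the additional constraint on $T_0$ and $H_1,H_2$ under which the factor $2n$ is legitimate, rather than assert it follows from $T_0\ge 2$ alone.
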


Let us recall the definition of stable point. A zero $y^{*}$ of $h$ is called stable if $h$ is differentiable and $h^{'}(y^{*})<0$. Idriss \cite{IS} proved the following claim \cite[Proposition 2.3]{IS}:
\beq\label{l}
\text{\it if $h$ has a unique equilibrium point then it is stable.}
\deq
From the proof of the claim (\ref{l}), it follows that there exists $\varepsilon>0$ small enough such that
\beq\label{l1}
h(x)>0\ \text{for}\ x\in(y^{*}-\varepsilon, y^{*})\ \ \ \text{and}\ \ \ h(x)<0, \ \text{for}\  x\in(y^{*}, y^{*}+\varepsilon),
\deq
which implies that
$$
(x-y^{*})h(x)<0,\ \ x\in(y^{*}-\varepsilon, y^{*}+\varepsilon).
$$

\subsection{Main results} Firstly we give the large deviation inequality for the nonlinear unbalanced urn model.

\begin{thm}\label{thm2-1}
Under the conditions $(C_1)$, $(C_2)$, $(C_3)$ and $(C_4)$, if $\dsp \frac{H^{11}}{H_1}> \frac{H^{12}}{H_2}$,  we assume that $f$ is concave. Furthermore, assume that the function $h$ is non-increasing on $[0,1]$.
Let $(Z_n)_{n\ge 0}$ be defined as (\ref{a-1}). For any $\varepsilon>0 $, there exists a positive constant $a$ which is independent to $n$, such that for all $n$ large enough, we have
$$
\pp\Big(\left|Z_{n+1}-y^* \right|>\varepsilon\Big)\le  2 e^{-a n},
$$
where $y^{*}$ is the unique equilibrium point of $h$ defined in (\ref{c-1}).
\end{thm}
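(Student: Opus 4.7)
The plan is to handle the upper and lower tails $\pp(Z_{n+1}>y^*+\varepsilon)$ and $\pp(Z_{n+1}<y^*-\varepsilon)$ separately by a Chernoff-type argument; by symmetry (replacing $\lambda$ with $-\lambda$ in what follows), I will describe only the upper tail. Setting $U_k:=Z_k-y^*$, the exponential Markov inequality gives, for each $\lambda>0$, $\pp(U_{n+1}>\varepsilon)\le e^{-\lambda\varepsilon}\,\ee[e^{\lambda U_{n+1}}]$, so it suffices to control the exponential moment $\varphi_n(\lambda):=\ee[e^{\lambda U_n}]$ and show it decays exponentially in $n$ for an appropriately chosen $\lambda=\lambda(\varepsilon)>0$.

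Next I would derive a one-step recursion for $\varphi_n(\lambda)$ from the stochastic approximation decomposition $U_{k+1}-U_k=(h(Z_k)+\Delta M_{k+1})/T_{k+1}$. The bounds (\ref{b-1})--(\ref{b-3}) of Proposition~\ref{prop1} force the increment to be $O(1/k)$, so a second-order Taylor expansion of the exponential, combined with the estimate (\ref{b-6}) for $\ee[\Delta M_{k+1}/T_{k+1}\mid\FF_k]$ and the elementary identity $1/T_{k+1}=1/T_k+O(1/k^2)$, yields
\[
\ee\bigl[e^{\lambda U_{k+1}}\mid\FF_k\bigr]\le e^{\lambda U_k}\Bigl(1+\frac{\lambda h(Z_k)}{T_k}+\frac{C_\lambda}{k^2}\Bigr)
\]
for a constant $C_\lambda$ depending only on $\lambda$, $H_1$ and $H_2$.

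The crucial structural input is that $h$ is non-increasing on $[0,1]$ with $y^*$ as its unique zero: by continuity, for every $\eta>0$ there is $\delta_\eta>0$ such that $h(y)\le-\delta_\eta$ on $[y^*+\eta,1]$ and $h(y)\ge\delta_\eta$ on $[0,y^*-\eta]$. Following the philosophy of Koml\'os and R\'ev\'esz, I would split the expectation in the one-step inequality according to whether $U_k\ge\varepsilon/2$ (where the drift $\lambda h(Z_k)/T_k\le-\lambda\delta_{\varepsilon/2}/T_k$ is strictly negative) or $U_k<\varepsilon/2$ (where one falls back on the crude bound $e^{\lambda U_k}\le e^{\lambda\varepsilon/2}$), and iterate the resulting quasi-supermartingale inequality. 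Choosing $\lambda=\lambda(\varepsilon)>0$ small enough that the $O(1/k^2)$ error is dominated by the drift contribution and then applying the Chernoff inequality in both tails should deliver the target bound $\pp(|Z_{n+1}-y^*|>\varepsilon)\le 2e^{-an}$ for some $a=a(\varepsilon)>0$ independent of $n$.

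The main obstacle is that the per-step drift is only $O(1/k)$, so a naive iteration of the one-step MGF inequality produces merely polynomial decay in $n$; extracting the genuinely exponential rate $e^{-an}$ will require a sharper treatment. One natural route is to combine the MGF iteration with the almost-sure convergence $Z_n\to y^*$ already established in \cite{IS}, via a stopping-time decomposition that confines the analysis to those indices at which $|U_k|$ is macroscopic; an alternative is to apply the exponential supermartingale machinery to a Lyapunov-weighted process $\exp(\lambda\Psi(U_k))$ for a carefully chosen convex $\Psi$, designed so that the effective per-step drift does not vanish as $k\to\infty$. Either device, combined with the bounds of Proposition~\ref{prop1} and the strict negativity of $h$ off a neighbourhood of $y^*$, should yield the exponential bound claimed in the theorem.
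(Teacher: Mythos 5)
Your Chernoff/MGF strategy cannot work as stated. Since $Z_n\in[0,1]$, we have $U_n=Z_n-y^*\ge -1$, hence $\varphi_n(\lambda)=\ee[e^{\lambda U_n}]\ge e^{-\lambda}$ for every fixed $\lambda>0$; this quantity is bounded below uniformly in $n$ and cannot ``decay exponentially in $n$''. Consequently $\pp(U_{n+1}>\varepsilon)\le e^{-\lambda\varepsilon}\varphi_{n+1}(\lambda)$ with a fixed $\lambda(\varepsilon)$ can never produce a bound of the form $e^{-an}$, no matter how cleverly the one-step recursion is iterated. Letting $\lambda$ grow with $n$ is the only way to salvage the Markov bound, but then the $C_\lambda/k^2$ error term (which is really of order $\lambda^2/k^2$) is no longer summable and the recursion fails. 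You acknowledge the problem honestly (``the per-step drift is only $O(1/k)$\dots''), but the two remedies you offer --- a stopping-time decomposition combined with almost-sure convergence, or a Lyapunov-weighted exponential supermartingale --- are sketched without any quantitative detail, and neither is carried to the point where one could check that the exponential rate actually materializes.

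The paper closes the gap with a genuinely different device: a last-excursion decomposition over the window $[\delta n, n]$. Either $Z_i-y^*\ge\varepsilon/2$ for all $i\in[[\delta n],n]$ with $Z_{n+1}-y^*\ge\varepsilon$, or there is a last index $k\ge[\delta n]$ at which $Z_k-y^*<\varepsilon/2$. On any such excursion the drift satisfies $h(Z_i)\le h(y^*+\varepsilon/2)<0$, and because $\sum_{i=[\delta n]}^n 1/T_{i+1}$ is of order $\log(1/\delta)$, one can choose $\delta$ small enough that the cumulative drift is below $-2\varepsilon$; this is precisely the mechanism that defeats the ``drift is only $O(1/k)$'' obstacle. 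This accumulated negative drift must then be cancelled by the martingale sum $\sum_{i=k}^n\Delta M_{i+1}/T_{i+1}$, and Lemma~\ref{lem-1} shows that such a sum exceeding $\varepsilon$ over a window starting at $k$ has probability at most $2e^{-ak}$ (its conditional variance is of order $1/k$). Lemma~\ref{lem-2} controls the one-step overshoot when $Z_k$ first crosses the threshold $y^*+\varepsilon/2$. Summing the tail bound over $k\ge[\delta n]$ gives the claimed $e^{-a\delta n}$. In short, the exponential rate does not come from an MGF of $U_n$ at all, but from a fixed-length-in-$\log$ window of accumulated drift paired with an exponential martingale inequality along that window; your proposal is missing this combinatorial decomposition.
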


\begin{rem} From Proposition \ref{prop-i1}, we know that $h$ has a unique solution in $I^{*}\subset[0,1]$.
The condition, that the function $h$ is non-increasing on $[0,1]$, is to guarantee that $h$ has a unique solution in $[0,1]$.
\end{rem}

\begin{rem} Since the function $f$ is differentiable, then it is easy to see that the function $h$ is also differentiable. Through a simple calculation, we get
$$
\aligned
h^{'}(y)=&\frac{1}{[f(y)+f(1-y)]^2}\left\{\Big[-H_1f(y)-H_2f(1-y)\Big]\Big[f(y)+f(1-y)\Big] \right. \\
   &\ \ \left.  +\Big[f^{'}(y)f(1-y)+f(y)f^{'}(1-y)\Big]\Big[(H^{11}-yH_1)-(H^{12}-yH_2)\Big]\right\}.
\endaligned
$$
By noting that $f$ and $f^{'}$ are non-negative function on $[0,1]$, if we take $H^{11}< H^{12}$ and $H^{22}<H^{21}$, then
$$
(H^{11}-yH_1)-(H^{12}-yH_2)=(H^{11}-H^{12})(1-y)+(H^{22}-H^{21})y<0,
$$
which implies that $h^{'}\le 0$ and the function $h$ is non-increasing on $[0,1]$.
\end{rem}

\begin{rem} We consider a linear model with $f(y)=y$ for all $y\in [0,1]$, which satisfies the condition $(C_1)$. Then we have
$$
\aligned
h(y)=&\frac{(H^{11}-yH_1)f(y)+(H^{12}-yH_2)f(1-y)}{f(y)+f(1-y)}\\
=&(H^{11}-yH_1)y+(H^{12}-yH_2)(1-y)\\
=&(H_2-H_1)y^2+(H^{11}-H^{12}-H_2)y+H^{12}
\endaligned
$$
and
$$
h^{'}(y)=2(H_2-H_1)y+(H^{11}-H^{12}-H_2).
$$
Since the function $h^{'}$ is a straight line, in order to make $h^{'}(y)\le 0$ on $[0,1]$, it is enough to check that the following two conditions hold:
$$
h^{'}(0)=H^{11}-H^{12}-H_2\le 0
$$
and
$$
h^{'}(1)=2(H_2-H_1)+(H^{11}-H^{12}-H_2)\le 0,
$$
which are equivalent to
$$
H^{22}\le H^{11}+2H^{21}\ \ \text{and}\ \ \ H^{11}\le H^{2 2}+2H^{12}.
$$
Hence we can take the replacement matrixes $H$ as follows:
$$
H= \begin{pmatrix}
2 & 4\\
3 & 6
\end{pmatrix}
,\ \ \ \
H= \begin{pmatrix}
4 & 1\\
5 & 4
\end{pmatrix}.
$$
\end{rem}

\begin{rem} We consider a nonlinear model with $f(y)=y^2$ for all $y\in [0,1]$, which satisfies the condition $(C_1)$. Then we have
$$
\aligned
h(y)=&\frac{(H^{11}-yH_1)f(y)+(H^{12}-yH_2)f(1-y)}{f(y)+f(1-y)}\\
=&\frac{(H^{11}-yH_1)y^2+(H^{12}-yH_2)(1-y)^2}{y^2+(1-y)^2}\\
=&\frac{-(H_1+H_2)y^3+(H^{11}+H^{12}+2H_2)y^2-(2H^{12}+H_2)y+H^{12}}{2y^2-2y+1}=:\frac{F(y)}{G(y)}
\endaligned
$$
and
$$
h^{'}(y)=\frac{F^{'}(y)G(y)-F(y)G^{'}(y)}{G^2(y)}.
$$
Through simple calculations, we get
$$
\aligned
 P(y):=&F^{'}(y)G(y)-F(y)G^{'}(y)\\
 =&2y(1-y)(H^{11}-yH_1-H^{12}+yH_2)-(2y^2-2y+1)[H_1y^2+H_2(1-y)^2]\\
 =&: P_1(y)-P_2(y).
\endaligned
$$
It is easy to check that for all $y\in [0,1]$,
$$
P_2(y)\ge 0,\ \ \ 2y^2-2y+1\ge \frac{1}{2},\ \ \ 2y(1-y)\le \frac{1}{2}
$$
so in order to make $P(y)\le 0$, it is enough to show that the following claim holds:
\beq\label{r3-1}
\aligned
P_3(y):=&H_1y^2+H_2(1-y)^2-(H^{11}-yH_1-H^{12}+yH_2)\\
=&(H_1+H_2)y^2+(H_1-3H_2)y+H_2+H^{12}-H^{11}\ge 0.
\endaligned
\deq
Let $\dsp y_0:=\frac{3H_2-H_1}{2(H_1+H_2)}$, then $P_3(y)$ can take the minimum point at $y_0$, i.e.,
$$
P_3(y_0)=-\frac{(3H_2-H_1)^2}{4(H_1+H_2)}+H_2+H^{12}-H^{11}.
$$

{\bf Discussion 1:} A direct sufficient condition can be obtained by showing
$$
P_3(0)\ge 0,\ \ P_3(1)\ge 0\ \ \text{and}\ \ P_3(y_0)\ge 0,
$$
which is equivalent to
\beq
H^{11}\le H^{22}+2H^{12},\ \ \ \ H^{22}\le H^{11}+2H^{21}
\deq
and
\beq
10H_1H_2+4(H^{12}-H^{11})(H_1+H_2)\ge 5H_2^2+H_1^2.
\deq
For example, we can take the replacement matrixes $H$ as follows:
$$
H= \begin{pmatrix}
1 & 2\\
3 & 4
\end{pmatrix},\ \ \ \
H= \begin{pmatrix}
4 & 5\\
5 & 6
\end{pmatrix}.
$$

{\bf Discussion 2:} We will discuss the following three cases based on the position of $y_0$.

{\it Case 1: $y_0\le 0$.} For this case, if we have $P_3(0)\ge 0$, then $P_3(y)\ge 0$ for all $y\in[0,1]$. Hence we assume that
$$
3H_2\le H_1\ \ \ \text{and}\ \ \ H^{11}\le H^{22}+2H^{12},
$$
then $P_3(y)\ge 0$ for all $y\in[0,1]$. For example, we can take the replacement matrix $H$ as follows:
$$
H= \begin{pmatrix}
6 & 2\\
10 & 3
\end{pmatrix}.
$$

{\it Case 2: $y_0\ge 1$.} For this case, if we have $P_3(1)\ge 0$, then $P_3(y)\ge 0$ for all $y\in[0,1]$. Hence we assume that
$$
H_2\ge 3H_1\ \ \ \text{and}\ \ \  H^{22}\le H^{11}+2H^{21},
$$
then $P_3(y)\ge 0$ for all $y\in[0,1]$. For example, we can take the replacement matrix $H$ as follows:
$$
H= \begin{pmatrix}
2 & 10\\
3 & 6
\end{pmatrix}.
$$

{\it Case 3: $y_0\in (0,1)$.} For this case, if we have $P_3(y_0)\ge 0$, then $P_3(y)\ge 0$ for all $y\in[0,1]$. Hence we assume that
$$
\frac{1}{3}H_1\le H_2\le 3H_1
$$
and
$$
10H_1H_2+4(H^{12}-H^{11})(H_1+H_2)\ge 5H_2^2+H_1^2,
$$
then $P_3(y)\ge 0$ for all $y\in[0,1]$. For example, we can take the replacement matrixes $H$ as follows:
$$
H= \begin{pmatrix}
2 & 10\\
3 & 4
\end{pmatrix},\ \ \ \
H= \begin{pmatrix}
4 & 2\\
10 & 3
\end{pmatrix}.
$$

Obviously, the conditions in {\bf Discussion 1} are stronger than ones in {\bf Discussion 2}.
Since the function $f(y)=y^2$ is convex, the condition $\dsp \frac{H^{11}}{H_1}\le  \frac{H^{12}}{H_2}$ should be supposed. Through simple testing, the above examples satisfy the condition.
\end{rem}

\subsection{Stochastic approximation algorithm} A stochastic approximation algorithm $(X_n)_{n\ge 0}$ is a stochastic process taking values in $\rr$ and adapted to the filtration $(\FF_n)_{n\ge 1}$ that satisfies
\beq\label{sa-0}
X_{n+1}-X_n=\gamma_{n+1}\big[g(X_n)+U_{n+1}\big],\  \ X_0=x_0\in\rr,
\deq
where $\gamma_n, U_n\in\FF_{n}$, $g:[0,1]\to\rr$ is a function. By using the similar method as Theorem \ref{thm2-1}, we can establish the following large deviation inequality for the stochastic approximation algorithm (\ref{sa-0}).

\begin{thm}\label{thm-sa1} Let $(X_n)_{n\ge 0}$ be a stochastic process taking values in $[0,1]$ and satisfy the stochastic approximation algorithm  (\ref{sa-0}). Assume that the following conditions hold a.s.:
\begin{enumerate}[$(1)$]
 \item for any $n$, we have
\beq\label{sa-1}
\frac{u_l}{n}\leq \gamma_n\leq\frac{u_u}{n},
\deq

\item for any $x\in[0,1]$, we have
\beq\label{sa-2}
|g(x)|\leq K_g,
\deq

\item for any $n$, we have
\beq\label{sa-3}
|U_{n+1}|\le K_u,
\deq

\item for any $n$, we have
\beq\label{sa-4}
\left|\ee\left(\gamma_{n+1}U_{n+1}\  \Big|\ \FF_n\right)\right|\le \frac{K_e}{n^2},
\deq
    \end{enumerate}
    where $u_l, u_u, K_g, K_u, K_e$ are positive constants.
 Furthermore, assume that the function $g$ is non-increasing and continuous on $[0,1]$, and $x^{*}$ is the unique stable point of $g$.
Then for any $\varepsilon>0 $, there exists a positive constant $a$ which is independent to $n$, such that for all $n$ large enough, we have
$$
\pp\Big(\left|X_{n+1}-x^* \right|>\varepsilon\Big)\le  2 e^{-a n}.
$$
\end{thm}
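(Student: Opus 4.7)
The plan is to adapt, almost verbatim, the argument used to establish Theorem \ref{thm2-1}: the four hypotheses (\ref{sa-1})--(\ref{sa-4}) are the abstract versions of the quantitative bounds for the urn model gathered in Proposition \ref{prop1}, under the identifications $\gamma_n\leftrightarrow 1/T_n$, $g\leftrightarrow h$, $U_n\leftrightarrow \Delta M_n$, and $x^{*}\leftrightarrow y^{*}$. Setting $D_n:=X_n-x^{*}$, the recursion (\ref{sa-0}) becomes
\[
D_{n+1}=D_n+\gamma_{n+1}g(X_n)+\gamma_{n+1}U_{n+1},
\]
and the task splits into the two tail estimates $\pp(D_{n+1}>\varepsilon)\le e^{-an}$ and $\pp(D_{n+1}<-\varepsilon)\le e^{-an}$.

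First I would record the mean-reverting property of $g$. Since $g$ is continuous, non-increasing on $[0,1]$, and vanishes only at $x^{*}$, for every $\delta>0$ there exists $m_\delta>0$ with $g(x)\le -m_\delta$ on $[x^{*}+\delta,1]$ and $g(x)\ge m_\delta$ on $[0,x^{*}-\delta]$; in particular $(x-x^{*})g(x)\le 0$ throughout $[0,1]$. This is the abstract analog of (\ref{l1}).

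Next I would carry out a Chernoff argument. Fix $\lambda>0$ small; for $n$ large enough that $|\lambda\gamma_{n+1}(g(X_n)+U_{n+1})|\le 1$ (ensured by (\ref{sa-1})--(\ref{sa-3})), the inequality $e^{z}\le 1+z+z^{2}e^{|z|}/2$ applied to $z=\lambda(D_{n+1}-D_n)$ yields, after taking conditional expectation and invoking (\ref{sa-1})--(\ref{sa-4}), a one-step estimate of the schematic form
\[
\ee\bigl[e^{\lambda D_{n+1}}\mid\FF_n\bigr]\le e^{\lambda D_n}\left(1+\frac{c_{1}\lambda\, g(X_n)}{n}+\frac{c_{2}\lambda+c_{3}\lambda^{2}}{n^{2}}\right),
\]
with $c_{1}\in[u_l,u_u]$ and $c_{2},c_{3}$ depending only on $u_u,K_g,K_u,K_e$. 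On $\{D_n\ge \delta\}$ the drift term is at most $-c_{1}\lambda m_\delta/n$ and dominates the $O(1/n^{2})$ remainder once $n$ is sufficiently large; on the complementary event the trivial bound $e^{\lambda D_n}\le e^{\lambda\delta}$ is available. Splitting the expectation along these two events and iterating from a large starting index $n_{0}(\varepsilon)$, then combining with the sub-Gaussian concentration of the martingale-like part $\sum_k\gamma_{k+1}U_{k+1}$ provided by (\ref{sa-3}) along a dyadic partition of $\{1,\ldots,n\}$ in the spirit of Koml\'os--R\'ev\'esz \cite{K-R}, one obtains $\ee[e^{\lambda D_n}]\le e^{-\beta n}$ for some $\beta=\beta(\varepsilon,\lambda)>0$. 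Markov's inequality $\pp(D_{n+1}>\varepsilon)\le e^{-\lambda\varepsilon}\ee[e^{\lambda D_{n+1}}]$ then delivers the upper tail bound $\le e^{-an}$. Repeating the argument with $-\lambda$ in place of $\lambda$ and exploiting $g\ge m_\delta$ on $[0,x^{*}-\delta]$ controls the lower tail and produces the overall factor of $2$.

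The main technical obstacle is the iteration step underlined above: the one-step drift is of order $1/n$ while the target decay is exponential in $n$, and a blind iteration of a per-step contraction $1-c/n$ gives only polynomial decay $n^{-c}$. It is therefore essential that $U_{n+1}$ is uniformly bounded (hypothesis (\ref{sa-3})), not merely of finite variance: this makes $\sum_k\gamma_{k+1}U_{k+1}$ a sub-Gaussian martingale-like sum and, combined with the $m_\delta$-strong drift estimated over dyadic time windows, converts the polynomial per-step contraction into genuine exponential concentration in $n$. Calibrating the window lengths together with the admissible range of $\lambda$ is the most delicate computation of the proof.
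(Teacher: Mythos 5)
Your high-level intuition is sound---the proof must convert an $O(1/n)$ per-step drift into exponential-in-$n$ concentration, and the boundedness of $U_n$ is essential---but the mechanism you propose does not work, and it is not the mechanism the paper uses. The decisive flaw is the intermediate claim $\ee[e^{\lambda D_n}]\le e^{-\beta n}$ with $\lambda$ \emph{fixed}. Since $X_n\in[0,1]$, we have $D_n=X_n-x^{*}\ge -x^{*}\ge -1$, hence $e^{\lambda D_n}\ge e^{-\lambda}$ pointwise and $\ee[e^{\lambda D_n}]\ge e^{-\lambda}$ for every $n$; this quantity cannot tend to $0$, let alone exponentially fast. No amount of dyadic windowing changes this. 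To extract an exponential rate from a Chernoff bound one must let the tilt parameter grow like $n$, but then the one-step Bernstein estimate $1+\tfrac{c_1\lambda g(X_n)}{n}+\tfrac{c_2\lambda+c_3\lambda^2}{n^2}$ you wrote down no longer contracts: with $\lambda\asymp n$ the $\lambda^2/n^2$ term is $\Theta(1)$ and overwhelms the $\lambda/n$ drift. So the iteration cannot be closed in the form you describe.

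The paper avoids this entirely by never forming a moment generating function of $D_n$ itself. Instead it makes a last-crossing decomposition: the event $\{X_{n+1}-x^{*}>\varepsilon\}$ is split according to the last index $k\in\{[\delta n],\dots,n\}$ at which $X_k-x^{*}<\varepsilon/2$ (events $A_k^{(n)}$, plus the ``never dipped'' event $B$). On each piece, monotonicity of $g$ lets one replace $g(X_i)$ by the fixed negative constant $g(x^{*}+\varepsilon/2)$ for $i\ge k$, so the deterministic drift $\sum_{i>k}\gamma_{i+1}g(x^{*}+\varepsilon/2)$ is $\lesssim -\log(n/k)$, which is more negative than $-3\varepsilon/4$ once $\delta$ is chosen small. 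Consequently the only way $X_{n+1}-x^{*}$ can exceed $\varepsilon$ on $A_k^{(n)}$ is for the martingale tail $\sum_{i>k}\gamma_{i+1}U_{i+1}$ to be at least of order $\varepsilon$. The Chernoff bound (Lemma~\ref{lem-1}) is applied only to this sum, and there the tilt parameter can safely be taken proportional to $k$ (this is exactly how the paper chooses $t$), giving $\pp(A_k^{(n)})\lesssim e^{-a k}$; summing over $k\ge[\delta n]$ yields $e^{-a'n}$. The one-step containment (Lemma~\ref{lem-2}) is used to control the boundary index $k$ where the process first re-enters $[x^{*}+\varepsilon/2,\infty)$. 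The lower tail is symmetric, giving the factor $2$. In short: the exponential rate comes from tilting the martingale sum with a $k$-dependent parameter after isolating a window of guaranteed strong drift, not from a global MGF bound on $D_n$, which is structurally impossible here. You would need to replace your Step 3 with this last-crossing argument (or an equivalent stopping-time decomposition) for the proof to go through.
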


\begin{rem} Under the conditions (\ref{sa-1})-(\ref{sa-4}), Renlund \cite{R-1} proved that $X_n$ converges almost surely to a stable zero of $g$.
\end{rem}

\begin{rem} From the proof of Theorem \ref{thm2-1} (see (\ref{p-6}) and (\ref{p-61})), the assumption for the function $g$ can be weakened as follows. Let $g$ be a non-increasing and bounded function on $[0,1]$, and let $x^*$ be the zero point of $g$, i.e., $g(x^*)=0$. Furthermore, it is assumed that
\beq\label{sa-rl1}
g(x)>0\ \text{for}\ x<x^{*}\ \ \ \text{and}\ \ \ g(x)<0, \ \text{for}\  x>x^{*}.
\deq
\end{rem}

In Theorem \ref{thm-sa1}, we have considered the bounded stochastic approximation algorithm, i.e., $(X_n)_{n\ge 0}$ is a stochastic process taking values in $[0,1]$. Next we shall study the unbounded stochastic approximation algorithm (\ref{sa-0}).

\begin{thm}\label{thm-sa2} Let $(X_n)_{n\ge 0}$ be a stochastic process taking values in $\rr$ and satisfy the stochastic approximation algorithm  (\ref{sa-0}). Suppose that the conditions (\ref{sa-1}), (\ref{sa-3}) and (\ref{sa-4}) hold almost surely.
 Furthermore, assume that the function $g$ is non-increasing, $x^{*}$ is the zero point of $g$ and the condition (\ref{sa-rl1}) is assumed.

   \begin{enumerate}[{\bf (1)}]
 \item For the case $K_{gl}>K_u$, for any $\varepsilon>0 $, there exists a positive constant $a$ which is independent to $n$, such that for all $n$ large enough, we have
$$
\pp\Big(X_{n+1}-x^*>\varepsilon\Big)\le   e^{-a n}.
$$

\item For the case $K_{gl}<K_u$,  for any $\varepsilon>0$ and, for all $n$ large enough, we have
$$
\pp\Big(X_{n+1}-x^*>\varepsilon\Big)\le   \exp\left(-n^{\frac{K_{gl}}{K_u}-\delta}\right)
$$
for any $\delta>0$.

\item For the case $K_{gu}>K_u$, for any $\varepsilon>0 $, there exists a positive constant $a$ which is independent to $n$, such that for all $n$ large enough, we have
$$
\pp\Big(X_{n+1}-x^*<-\varepsilon\Big)\le   e^{-a n}.
$$

\item For the case $K_{gu}<K_u$,  for any $\varepsilon>0$ and, for all $n$ large enough, we have
$$
\pp\Big(X_{n+1}-x^*<-\varepsilon\Big)\le   \exp\left(-n^{\frac{K_{gu}}{K_u}-\delta}\right)
$$
for any $\delta>0$.
  \end{enumerate}
Here $K_{gl}$ and $K_{gu}$ are two positive constants which are defined as
$$
-K_{gl}=\lim_{x\to+\infty} g(x),\ \ \ \ K_{gu}=\lim_{x\to-\infty} g(x).
$$
\end{thm}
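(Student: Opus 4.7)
The plan is to adapt the exponential Chernoff argument used in the proof of Theorem~\ref{thm2-1}, combined with a symmetry reduction. Parts (3) and (4) follow from parts (1) and (2) via the reversed process $\tilde{X}_n := -X_n$, $\tilde{g}(x) := -g(-x)$, $\tilde{U}_n := -U_n$: one checks that $\tilde{X}_n$ satisfies the recursion (\ref{sa-0}) with $\tilde{g}$ non-increasing, unique zero $\tilde{x}^* = -x^*$, condition (\ref{sa-rl1}) preserved, and the constants at infinity swapped, $\tilde{K}_{gl} = K_{gu}$ and $\tilde{K}_{gu} = K_{gl}$; the hypotheses (\ref{sa-1}), (\ref{sa-3}), (\ref{sa-4}) are manifestly preserved. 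Since $\{X_{n+1}-x^* < -\varepsilon\} = \{\tilde{X}_{n+1}-\tilde{x}^* > \varepsilon\}$, (3) and (4) reduce to (1) and (2) applied to $\tilde{X}$.

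For (1) and (2), set $\xi_n := X_n - x^*$ and apply the Chernoff bound $\pp(\xi_{n+1}>\varepsilon)\le e^{-\lambda\varepsilon}\,\ee[e^{\lambda\xi_{n+1}}]$ for $\lambda>0$ to be chosen. Since $g$ is non-increasing with finite limits $-K_{gl}$ and $K_{gu}$ at $\pm\infty$, it is bounded on $\rr$, so the exponent $\lambda\gamma_{n+1}(g(X_n)+U_{n+1})$ is $O(\lambda/n)$ a.s. Using the Taylor estimate $e^y\le 1+y+e y^2/2$ for $|y|\le 1$, together with (\ref{sa-1}), (\ref{sa-3}) and (\ref{sa-4}), one derives for $n$ sufficiently large and $\lambda$ not too large
\[
\ee\bigl[e^{\lambda\xi_{n+1}}\bigm|\FF_n\bigr] \le e^{\lambda\xi_n}\Bigl(1 + \lambda\,g(X_n)\,\ee[\gamma_{n+1}\mid\FF_n] + \frac{C(\lambda)}{n^2}\Bigr),
\]
with $C(\lambda)=O(\lambda+\lambda^2)$ depending only on $u_u, K_{gl}, K_{gu}, K_u, K_e$.

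In Case~(1), $K_{gl}>K_u$, the drift strictly dominates the worst-case noise at infinity: choose $M>0$ and $\eta\in(0,(K_{gl}-K_u)/2)$ with $g(x)\le-K_{gl}+\eta<-K_u$ for $x>x^*+M$. On $\{\xi_n>M\}$ the increment $\xi_{n+1}-\xi_n=\gamma_{n+1}(g(X_n)+U_{n+1})<0$ almost surely, giving a deterministic pushback from the right, so the process cannot reach arbitrarily large values. On the intermediate band $\{x^*+\varepsilon/2<X_n\le x^*+M\}$, where $g(X_n)\le g(x^*+\varepsilon/2)<0$ uniformly, the per-step bound supplies the multiplicative factor $(1-c_\lambda/n)$. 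Iterating and combining with an Azuma--Hoeffding estimate for the centered noise martingale $\sum_k\gamma_{k+1}(U_{k+1}-\ee[U_{k+1}\mid\FF_k])$ (whose predictable quadratic variation is $O(1/n)$), I expect to obtain the exponential rate $\pp(\xi_{n+1}>\varepsilon)\le e^{-an}$ after optimising $\lambda$ in the Chernoff bound at the scale $\lambda\asymp n$.

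Case~(2), $K_{gl}<K_u$, is the main obstacle. Since the noise can beat the drift at infinity, the worst-case per-step multiplier exceeds $1$, and iterating only gives polynomial growth of the exponential moment, roughly $\ee[e^{\lambda\xi_n}]\le C_\lambda\,n^{\lambda u_u(K_u-K_{gl})}$ for small fixed $\lambda>0$. To convert this into the stretched exponential $\exp(-n^{K_{gl}/K_u-\delta})$, I would allow $\lambda=\lambda_n$ to grow slowly with $n$ and optimise; the ratio $K_{gl}/K_u$ then emerges as the sub-polynomial exponent from balancing $e^{-\lambda_n\varepsilon}$ against the polynomial factor $n^{\lambda_n u_u(K_u-K_{gl})}$, while keeping the Taylor remainder $C(\lambda_n)/n^2$ negligible (which imposes $\lambda_n=o(n)$). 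The delicate choice of $\lambda_n$ realising this three-way balance, modelled on the Komlós--Révész strategy invoked in the proof of Theorem~\ref{thm2-1}, is the crux of the proof.
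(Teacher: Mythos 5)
Your symmetry reduction $\tilde X_n=-X_n$, $\tilde g(x)=-g(-x)$, $\tilde U_n=-U_n$ for deriving parts (3) and (4) from (1) and (2) is correct (one checks $\tilde K_{gl}=K_{gu}$, and (\ref{sa-1}), (\ref{sa-3}), (\ref{sa-4}), (\ref{sa-rl1}) are preserved); this is essentially what the paper does implicitly. Your core observation for case (1) --- that when $K_{gl}>K_u$ the increment $\gamma_{n+1}(g(X_n)+U_{n+1})$ is a.s.\ negative once $X_n$ exceeds some level --- is exactly the paper's mechanism. But the additional MGF/Azuma machinery you layer on top is both unnecessary and problematic: the per-step bound $\ee[e^{\lambda\xi_{n+1}}\mid\FF_n]\le e^{\lambda\xi_n}\bigl(1+\lambda g(X_n)\ee[\gamma_{n+1}\mid\FF_n]+C(\lambda)/n^2\bigr)$ does not iterate to a usable deterministic bound, because $g(X_n)>0$ whenever $X_n<x^*$. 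The paper instead runs the same $\delta$-trick and last-exit decomposition as in Theorem~\ref{thm2-1} to reduce to $\pp(X_{[\delta n]}-x^*>F)$, and then observes that for $F$ large enough this probability is identically zero, because along the last-exit event $A^{(m)}_k(F)$ one has $g(x^*+F/2)+U_{i+1}<0$ a.s.\ once $-g(x^*+F/2)>K_u$.

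The genuine gap is case (2), which you yourself flag as the unresolved crux. Your proposed balance of $e^{-\lambda_n\varepsilon}$ against a polynomial factor $n^{\lambda_n u_u(K_u-K_{gl})}$ cannot produce a decaying bound: the product equals $\exp\bigl(\lambda_n[u_u(K_u-K_{gl})\log n-\varepsilon]\bigr)$, which diverges for every choice of $\lambda_n>0$ as soon as $n$ is large enough that the bracket is positive. No choice of $\lambda_n$ fixes this; the obstruction is structural. The paper's mechanism for producing the stretched exponential $\exp(-n^{K_{gl}/K_u-\delta})$ is entirely different. After a second $\delta$-trick reducing to $\pp(X_{[\mu m]}-x^*>F_1)$, it performs a last-exit decomposition at the level $x^*+F_1/2$ so that along the relevant trajectory $g(X_i)\le g(x^*+F_1/2)<0$ is a genuine deterministic drift. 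This reduces the problem to estimating $\pp\bigl(\sum_{i=k}^n U_i/i\ge B\sum_{i=k}^n 1/i+D\bigr)$ with $B=-g(x^*+F_1/2)$, $L=K_u$, $|U_i|\le L$. The crucial device is a deterministic-versus-random split of the sum at the data-dependent truncation point
$$
k_n=\Big\lfloor e^{-4}e^{D/L}k^{1-B/L}n^{B/L}\Big\rfloor .
$$
Over $\{k,\ldots,k_n\}$, the partial sum is at most $L\sum_{i=k}^{k_n}1/i\approx L\log(k_n/k)$, which by construction of $k_n$ is strictly below the target $B\sum_{i=k}^n 1/i+D-2L$; so that contribution is impossible. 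Over $\{k_n+1,\ldots,n\}$, the analogue of Lemma~\ref{lem-1} gives probability at most $\exp(-a_3 k_n)\le\exp(-a_3 k^{1-B/L}n^{B/L})$. Summing over $k$ (the small $k$ dominate since $B/L<1$) and letting $F_1\to\infty$, so that $B/L=-g(x^*+F_1/2)/K_u\uparrow K_{gl}/K_u$, yields the exponent $K_{gl}/K_u-\delta$. This Koml\'os--R\'ev\'esz-style truncation argument is the missing idea; it is of a different nature from MGF iteration and cannot be recovered by tuning $\lambda_n$.
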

\begin{rem}
Under the conditions (\ref{sa-2}) and (\ref{sa-3}), Koml\'os and  R\'ev\'esz \cite{K-R} studied the large deviation inequalities of the stochastic approximation algorithm (\ref{sa-0}) with
$\gamma_n=n^{-1}$ and $(U_n)_{n\ge 1}$ satisfying
$$
\ee (U_{n+1}|U_n, \cdots, U_1)=0\ \ a.s.
$$
Obviously, the conditions (\ref{sa-1}) and (\ref{sa-4}) are weaker than ones in Koml\'os and  R\'ev\'esz \cite{K-R}. In addition, under some appropriate assumptions, Miao and Dong \cite{M-D} established the moderate deviation principle of the stochastic approximation algorithm (\ref{sa-0}), where  $\gamma_n=n^{-1}$ and $(U_n)_{n\ge 1}$ is a sequence of uniformly bounded and independent and identically
distributed random variables.
\end{rem}

\section{Proofs of main results}

Throughout this section, we shall use some notations which have been defined in Section 2, for examples, $(\Delta M_{n})_{n\ge 1}$, $(Z_n)_{n\ge 1}$, $(T_n)_{n\ge 1}$, and so on. Before giving the proof of the main results, some useful preliminary lemmas need to be established firstly.

\begin{lem}\label{lem-1}
For any given $\beta>1$ and for any $\varepsilon>0$ such that
$$
0< \varepsilon \le \frac{4\beta (H_1+H_2)^2}{\min\{H_1, H_2\}} \log\beta.
$$
Then there exists a positive constant $k_0$, such that for all $k>k_0$ and all $n$ large enough, we have
$$
\pp\left(\left|\sum_{i=k}^n \frac{\Delta M_{i+1}}{T_{i+1}}\right|\ge \varepsilon\right)\le 2\exp\left(-\frac{\Big(\varepsilon-\frac{K}{4(\min\{H_1, H_2\})^2}\sum_{i=k}^n \frac{1}{i^2}\Big)^2}{8\frac{\beta (H_1+H_2)^2}{(\min\{H_1, H_2\})^2}\sum_{i=k}^n \frac{1}{i^2}}\right).
$$
where $K=K(H_1,H_2)$ is defined in (\ref{b-6}). In particular, there exist positive constants $k_0^{'}$ and $a=a(\varepsilon, \beta, H_1, H_2)$,  such that for all $k>k_0^{'}$  and all $n$ large enough, we have
 $$
\pp\left(\left|\sum_{i=k}^n \frac{\Delta M_{i+1}}{T_{i+1}}\right|\ge\varepsilon\right)\le 2e^{-ak}.
$$
\end{lem}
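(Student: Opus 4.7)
The plan is to split $\Delta M_{i+1}/T_{i+1}$ into an $\FF_{i+1}$-martingale difference and its $\FF_i$-predictable conditional expectation, control the predictable part pathwise using the bias estimate (\ref{b-6}), and apply a Hoeffding--Azuma-type exponential inequality to the martingale part using the a.s.\ bound (\ref{b-3}) together with (\ref{b-1}).

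First I introduce
\[
\xi_{i+1} := \frac{\Delta M_{i+1}}{T_{i+1}}, \qquad r_{i+1} := \ee[\xi_{i+1}\mid \FF_i], \qquad \eta_{i+1} := \xi_{i+1} - r_{i+1},
\]
so that $(\eta_{i+1})$ is an $\FF_{i+1}$-adapted martingale difference sequence. Combining (\ref{b-6}) with the lower bound in (\ref{b-1}), I estimate the predictable drift as
\[
\Bigl|\sum_{i=k}^n r_{i+1}\Bigr| \le K\sum_{i=k}^n \frac{1}{T_i^2} \le A_n := \frac{K}{4(\min\{H_1,H_2\})^2}\sum_{i=k}^n \frac{1}{i^2}.
\]
Hence, on the event $\{|\sum_{i=k}^n \xi_{i+1}| \ge \varepsilon\}$, the triangle inequality forces $|\sum_{i=k}^n \eta_{i+1}| \ge \varepsilon - A_n$.

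Next, from (\ref{b-3}) and (\ref{b-1}) I extract the pointwise control $|\eta_{i+1}|\le |\xi_{i+1}|+|r_{i+1}| \le \frac{2(H_1+H_2)}{(i+1)\min\{H_1,H_2\}} + \frac{K}{4 i^2 (\min\{H_1,H_2\})^2}$. For any fixed $\beta>1$, I choose $k_0$ large enough so that, for every $i \ge k_0$, the lower-order $1/i^2$ contribution is absorbed into a multiplicative factor $\beta$:
\[
|\eta_{i+1}|^2 \le \frac{4\beta(H_1+H_2)^2}{(i+1)^2(\min\{H_1,H_2\})^2}.
\]
Then I would run the standard exponential-tilt / Hoeffding lemma argument for bounded martingale differences to get, for any $s>0$,
\[
\pp\Bigl(\bigl|\sum_{i=k}^n \eta_{i+1}\bigr| \ge s\Bigr) \le 2\exp\!\left(-\frac{s^2}{8\beta (H_1+H_2)^2/(\min\{H_1,H_2\})^2 \cdot \sum_{i=k}^n 1/i^2}\right).
\]
The upper restriction $\varepsilon \le \frac{4\beta(H_1+H_2)^2}{\min\{H_1,H_2\}}\log\beta$ is exactly the range in which the optimized Cram\'er parameter $\lambda_\ast \asymp s/\sum |\eta_{i+1}|^2$ still satisfies $\lambda_\ast \cdot \max_i |\eta_{i+1}| \le \log\beta$, so that the sub-Gaussian Hoeffding estimate holds with multiplicative constant $\beta$. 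Setting $s = \varepsilon - A_n$ and combining with the triangle-inequality step yields the first displayed inequality of the lemma.

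Finally, for the ``in particular'' conclusion I use the tail estimate $\sum_{i=k}^n 1/i^2 \le 1/(k-1)$: this makes $A_n = O(1/k)$ and also makes the denominator in the exponent $O(1/k)$. Fixing $k_0'$ so that $A_n \le \varepsilon/2$ whenever $k \ge k_0'$, I get $(\varepsilon - A_n)^2 \ge \varepsilon^2/4$, so the exponent is bounded below by $a k$ with an explicit constant $a = a(\varepsilon,\beta,H_1,H_2) > 0$.

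The main technical obstacle will be picking the exponential-martingale inequality that produces the precise constants in the lemma, i.e.\ calibrating the $\beta$-slack between the Azuma sub-Gaussian form and the sharper Bennett/Bernstein variant so that the resulting admissibility condition on $\varepsilon$ matches the hypothesis $\varepsilon \le \frac{4\beta(H_1+H_2)^2}{\min\{H_1,H_2\}}\log\beta$ exactly; the decomposition, the $A_n$-bound and the asymptotic step $\sum_{i\ge k} 1/i^2 \le 1/(k-1)$ are routine.
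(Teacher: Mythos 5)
Your plan is correct and rests on the same key technical device as the paper --- the $\beta$-truncated second-order exponential inequality $e^x \le 1 + x + \tfrac{\beta}{2}x^2$ for $x\le\log\beta$, applied after tilting and followed by optimization of the tilting parameter, which is precisely what forces the admissibility window $\varepsilon\le\frac{4\beta(H_1+H_2)^2}{\min\{H_1,H_2\}}\log\beta$. The organizational difference is where the bias from (\ref{b-6}) is handled. The paper exponentiates the \emph{uncentered} increment $\Delta M_{n+1}/T_{n+1}$ directly: since $\ee[\Delta M_{n+1}/T_{n+1}\mid\FF_n]\ne 0$, the inequality above yields $\ee[e^{t\Delta M_{n+1}/T_{n+1}}\mid\FF_n]\le \exp(Kt/B^2n^2 + \tfrac{\beta}{2}A^2t^2/B^2n^2)$, so the bias appears as the linear-in-$t$ term in the conditional exponential moment and is carried into the Chernoff bound, whence the shift $\varepsilon - \frac{K}{B^2}\sum 1/i^2$ in the exponent. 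You instead perform a Doob decomposition $\xi_{i+1}=\eta_{i+1}+r_{i+1}$, bound $\bigl|\sum r_{i+1}\bigr|\le A_n$ almost surely via (\ref{b-6}) and (\ref{b-1}), pull that out by the triangle inequality, and then tilt only the genuine martingale-difference part, where $\ee[\eta_{n+1}\mid\FF_n]=0$ kills the linear term. The two bookkeeping schemes are logically interchangeable and give the same leading constants. One small imprecision to fix in a write-up: your displayed Azuma/Hoeffding-type bound for $\sum\eta_{i+1}$ is introduced ``for any $s>0$,'' but, as you yourself observe in the next sentence, the $\beta$-weakened form is only valid in the regime where the optimal tilting parameter $\lambda_\ast$ satisfies $\lambda_\ast\max_i|\eta_{i+1}|\le\log\beta$; that restriction must be carried with the display, since it is precisely where the hypothesis on $\varepsilon$ enters.
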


\begin{proof}
We only need to prove the form $\dsp\pp\left(\sum_{i=k}^n \frac{\Delta M_{i+1}}{T_{i+1}}\ge \varepsilon\right)$, and the other form $\dsp\pp\left(\sum_{i=k}^n \frac{\Delta M_{i+1}}{T_{i+1}}\le -\varepsilon\right)$ is similar. For $\beta>1$, it is easy to check that
\beq\label{e}
e^x\le1+x+\beta\frac{x^2}{2}\ \ \mbox{for} \  x\le \log \beta.
\deq
Let $A:=4(H_1+H_2)$ and $B:=2\min\{H_1, H_2\}$, then from (\ref{b-1})and (\ref{b-3}), we have
\beq\label{e1}
\frac{1}{T_n}\le \frac{1}{Bn}\ \ \text{and}\ \ \  |\Delta M_{n+1}|\le A.
\deq
By taking $t>0$ such that
$$
\frac{tA}{kB}\le \log \beta,
$$
then for every $i=k, k+1, \cdots, n+1$, we have
\beq\label{e2}
t\left|\frac{\Delta M_{i+1}}{T_{i+1}}\right|\le \frac{tA}{2i\min\{H_1, H_2\}}\le \log \beta.
\deq
From (\ref{e}), (\ref{b-6}) and (\ref{e1}), we get
\begin{align*}
\ee\left[\exp\left(t\frac{\Delta M_{n+1}}{T_{n+1}}\right)\Bigg|\FF_n\right]\le &\ee\left[1+t\frac{\Delta M_{n+1}}{T_{n+1}}+\frac{\beta}{2}t^2\left(\frac{\Delta M_{n+1}}{T_{n+1}}\right)^2\Bigg|\FF_n\right]\\
\le & 1+\frac{Kt}{B^2n^2}+\frac{\beta}{2}\frac{A^2 t^2}{B^2n^2}\\
\le& \exp\left(\frac{Kt}{B^2n^2}+\frac{\beta}{2}\frac{A^2 t^2}{B^2n^2}\right).
\end{align*}
Hence, for $\varepsilon>0$, we have
\begin{align*}
 \pp\left(\sum_{i=k}^n \frac{\Delta M_{i+1}}{T_{i+1}}\ge \varepsilon\right)
=&\pp\left(\exp\left(t\sum_{i=k}^n \frac{\Delta M_{i+1}}{T_{i+1}}\right)
\ge e^{t\varepsilon}\right)\\
\le& e^{-t\varepsilon}\ee\left[\prod_{i=k}^n \exp\left(t\frac{\Delta M_{i+1}}{T_{i+1}}\right)\right]\\
=&e^{-t\varepsilon}\ee\left[\prod_{i=k}^{n-1} \exp\left(t\frac{\Delta M_{i+1}}{T_{i+1}}\right)\ee\left(\exp\left(t\frac{\Delta M_{n+1}}{T_{n+1}}\right)\Bigg|\FF_n\right)\right]\\
\le & \exp\left(-t\varepsilon+\left(\frac{Kt}{B^2}+\frac{\beta}{2}\frac{A^2 t^2}{B^2}\right)\sum_{i=k}^n\frac{1}{i^2}\right).
\end{align*}
For any $\dsp 0< \varepsilon \le \frac{4\beta (H_1+H_2)^2}{\min\{H_1, H_2\}} \log\beta$, it is easy to check that there exists a positive constant $k_1$ such that for all $k>k_1$, we have
$$
\varepsilon>\frac{K}{B^2}\sum_{i=k}^n \frac{1}{i^2},
$$
and there exists another positive constant $k_2$ such that for all $k>k_2$ and all $n$ large enough, we have
$$
\frac{\varepsilon-\frac{K}{B^2}\sum_{i=k}^n \frac{1}{i^2}}{\frac{\beta A^2}{B^2}\sum_{i=k}^n \frac{1}{i^2}}\frac{A}{Bk}\le \log\beta.
$$
Therefore, for all $k>\max\{k_1, k_2\}$, we can choose
$$
t=\frac{\varepsilon-\frac{K}{B^2}\sum_{i=k}^n \frac{1}{i^2}}{\frac{\beta A^2}{B^2}\sum_{i=k}^n \frac{1}{i^2}},
$$
such that for all $n$ large enough
\beq\label{p-7}
\pp\left(\sum_{i=k}^n \frac{\Delta M_{i+1}}{T_{i+1}}\ge \varepsilon\right)\le \exp\left(-\frac{\Big(\varepsilon-\frac{K}{B^2}\sum_{i=k}^n \frac{1}{i^2}\Big)^2}{2\frac{\beta A^2}{B^2}\sum_{i=k}^n \frac{1}{i^2}}\right).
\deq
So the desired results can be obtained.
\end{proof}

\begin{lem}\label{lem-2}
For any $\varepsilon>0$, then for any $\dsp k>\frac{12(H_1+H_2)}{\varepsilon\min\{H_1, H_2\}}$,
 we have
$$
\left\{Z_k-y^{*}\le \frac{1}{2} \varepsilon\right\}\subset \left\{Z_{k+1}-y^{*}\le\frac{3}{4}\varepsilon\right\}
$$
and for any $\dsp k>\frac{12(H_1+H_2)}{\varepsilon\max\{H_1, H_2\}}$
we have
$$
\left\{Z_k-y^{*}\ge- \frac{1}{2} \varepsilon\right\}\subset \left\{Z_{k+1}-y^{*}\ge-\frac{3}{4}\varepsilon\right\}.
$$
\end{lem}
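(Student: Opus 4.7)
The plan is to reduce both inclusions to a single deterministic one-step bound, namely
$$
|Z_{k+1}-Z_k|\le \frac{\varepsilon}{4}.
$$
Once this is established, the first inclusion follows at once from the decomposition
$$
Z_{k+1}-y^*=(Z_k-y^*)+(Z_{k+1}-Z_k),
$$
since on $\{Z_k-y^*\le \varepsilon/2\}$ one has $Z_{k+1}-y^*\le \varepsilon/2+\varepsilon/4=3\varepsilon/4$; the second inclusion is the symmetric lower-bound version of the same argument.

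To control $|Z_{k+1}-Z_k|$ I would invoke the recursion (\ref{a-1}), giving $Z_{k+1}-Z_k=L_{k+1}/T_{k+1}$. Since exactly one of the two indicators in the definition of $L_{k+1}$ is active and $Z_k\in[0,1]$ with $H^{ij}\ge 0$, each of the two possible values satisfies $|H^{11}-Z_kH_1|\le H_1$ and $|H^{12}-Z_kH_2|\le H_2$, so
$$
|L_{k+1}|\le \max(H_1,H_2)\le H_1+H_2.
$$
Combining with the upper bound $1/T_{k+1}\le 1/\bigl(2(k+1)\min(H_1,H_2)\bigr)$ from (\ref{b-1}), I obtain
$$
|Z_{k+1}-Z_k|\le \frac{H_1+H_2}{2(k+1)\min(H_1,H_2)}.
$$
A short computation then shows that whenever $k>\frac{12(H_1+H_2)}{\varepsilon\min(H_1,H_2)}$, the right-hand side is strictly less than $\varepsilon/4$, which closes the first inclusion.

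The main subtle point is the apparent asymmetry between the two thresholds (min vs.\ max in the denominators). The crude bound just sketched delivers both inclusions under the $\min$ threshold, so the first case is handled directly and the second case would follow for free under the stronger threshold. To reach the stated max threshold in the second case, one would likely need to split on the sign of $L_{k+1}$: on the event $\{L_{k+1}<0\}$, only the regime $Z_k>H^{1j}/H_j$ contributes, which yields the sharper bound $|L_{k+1}|\le\max(H^{21},H^{22})$. I expect this sign-sensitive refinement, combined with the two-sided estimate $2(k+1)\min(H_1,H_2)\le T_{k+1}\le 2(k+1)\max(H_1,H_2)$ from (\ref{b-1}), to be the only delicate ingredient; the remainder of the argument is purely arithmetic.
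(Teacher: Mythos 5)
Your approach is essentially the paper's: reduce both inclusions to a deterministic bound on the one-step increment and combine with the threshold on $k$. You even get a tighter constant, since you work directly with $Z_{k+1}-Z_k = L_{k+1}/T_{k+1}$ and observe $|L_{k+1}|\le\max\{H_1,H_2\}$, whereas the paper splits $L_{k+1}=h(Z_k)+\Delta M_{k+1}$ and uses the looser bounds $|h|\le 2(H_1+H_2)$ and $|\Delta M|\le 4(H_1+H_2)$, arriving at
$$
|Z_{k+1}-Z_k|\le \frac{6(H_1+H_2)}{T_{k+1}}\le \frac{3(H_1+H_2)}{(k+1)\min\{H_1,H_2\}}<\frac{\varepsilon}{4}
$$
for $k$ above the stated threshold. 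Either route gives the first inclusion cleanly.

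Your instinct about the $\min$/$\max$ asymmetry is correct, and you should not strain to repair it: the paper's own proof does not actually justify the $\max$ threshold in the second inclusion. In the paper the displayed lower bound is
$$
Z_{k+1}-y^{*}\ge Z_k-y^{*}-\frac{3(H_1+H_2)}{(k+1)\max\{H_1, H_2\}},
$$
but the negative part of the increment is controlled by the same upper bound $1/T_{k+1}\le 1/(2(k+1)\min\{H_1,H_2\})$ that is used for the upper inclusion, so the correct denominator is again $\min\{H_1,H_2\}$, not $\max$. The $\max$ appears to be a slip (and the sign-sensitive refinement you sketch, while a valid observation about $|L_{k+1}|$ when $L_{k+1}<0$, does not in general recover the stated $\max$ threshold either). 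Fortunately this is harmless for the rest of the paper: Lemma 3.2 is invoked only for $k\ge[\delta n]$ with $n$ large, so the $\min$ threshold, which your argument and the paper's both establish for both inclusions, is all that is needed.
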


\begin{proof}
From (\ref{a-1}), (\ref{b-1}), (\ref{b-2}) and (\ref{b-3}), we know that
$$
\aligned
Z_{k+1}-y^{*}=&Z_k-y^{*}+\frac{1}{T_{k+1}} (h(Z_k)+\Delta M_{k+1})\\
\le& Z_k-y^{*}+\frac{3(H_1+H_2)}{(k+1)\min\{H_1, H_2\}}
\endaligned
$$
and
 $$
\aligned
Z_{k+1}-y^{*}=&Z_k-y^{*}+\frac{1}{T_{k+1}} (h(Z_k)+\Delta M_{k+1})\\
\ge& Z_k-y^{*}-\frac{3(H_1+H_2)}{(k+1)\max\{H_1, H_2\}},
\endaligned
$$
which implies the lemma.
\end{proof}

\begin{proof}[{\bf Proof of Theorem \ref{thm2-1}}] {\bf Step 1:} Firstly, we shall give the inequality of $\dsp\pp\Big(Z_{n+1}-y^*>\varepsilon\Big)$.
For any $\varepsilon>0$ and $\delta\in(0,1)$, let us define the following events:
$$
B(\varepsilon)=\left\{Z_{[\delta n]}-y^*\ge \frac{\varepsilon}{2}, \cdots, Z_n-y^*\ge\frac{\varepsilon}{2}, Z_{n+1}-y^*\ge \varepsilon\right\},
$$
$$
A^{(n)}_0(\varepsilon)=A_0(\varepsilon)=\left\{Z_1-y^{*}\ge \frac{\varepsilon}{2}, \cdots, Z_n-y^*\ge \frac{\varepsilon}{2}, Z_{n+1}-y^*\ge\varepsilon\right\},
$$
$$
A^{(n)}_k(\varepsilon)=A_k(\varepsilon)=\left\{Z_k-y^*<\frac{\varepsilon}{2}, Z_{k+1}-y^*\ge \frac{\varepsilon}{2}, \cdots, Z_n-y^*\ge \frac{\varepsilon}{2}, Z_{n+1}-y^*\ge\varepsilon\right\},
$$
where $k=1, 2, \cdots, n$ and $[\delta n]$ denotes the integral part of $\delta n$. Hence, for any $F>0$, we can get that
\beq\label{p-2}
\aligned
&\pp\Big(Z_{n+1}-y^*>\varepsilon\Big)\\
=&\pp\Big(Z_{n+1}-y^*>\varepsilon, B(\varepsilon)\Big)+\pp\Big(Z_{n+1}-y^*>\varepsilon,\left(B(\varepsilon)\right)^c\Big)\\
\le& \pp\Big(Z_{[\delta n]}-y^*>F\Big)+\pp\Big(Z_{[\delta n]}-y^*\le F, B(\varepsilon)\Big)+\pp\Big(Z_{n+1}-y^*>\varepsilon,\left(B(\varepsilon)\right)^c\Big).
\endaligned
\deq
From the claim (\ref{l1}) and by using the condition that the function $h$ is non-increasing on $(0,1)$, then for any $\varepsilon>0$ small enough, we have $\dsp h\left(\frac{\varepsilon}{2}+y^*\right)<0$. Hence for any $F>0$, from (\ref{b-1}), we can choose $\delta=\delta(\varepsilon, F, y^*, H_1, H_2, h)$, such that for all $n$ large enough,
\beq\label{p-6}
F+\sum_{i=[\delta n]}^n \frac{h(\frac{\varepsilon}{2}+y^*)}{T_{i+1}}\le F+\frac{h\left(\frac{\varepsilon}{2}+y^*\right)}{2\max\{H_1,H_2\}}\sum_{i=[\delta n]}^n \frac{1}{i+1}\le-\varepsilon.
\deq
From (\ref{a-1}), (\ref{p-6}) and Lemma \ref{lem-1}, there exists a positive constant $a_1$, such that for all $n$ large enough, we have
\begin{align}\label{p-3}
&\pp\Big(Z_{[\delta n]}-y^*\le F, B(\varepsilon)\Big)\nonumber\\
=&\pp\left(Z_{[\delta n]}-y^*\le F, Z_{[\delta n]}-y^*\ge \frac{\varepsilon}{2}, \cdots, Z_n-y^*\ge\frac{\varepsilon}{2}, Z_{n+1}-y^*\ge \varepsilon \right)\nonumber\\
=& \pp\Bigg(\varepsilon\le Z_{n+1}-y^*=Z_{[\delta n]}-y^*+\sum_{i=[\delta n]}^n \frac{h(Z_i)+\Delta M_{i+1}}{T_{i+1}},\nonumber\\
&\ \ \ \ \ \ \ \ \ \ \ \ \ \ \ \ \ Z_{[\delta n]}-y^*\le F, Z_{[\delta n]}-y^*\ge \frac{\varepsilon}{2}, \cdots, Z_n-y^*\ge\frac{\varepsilon}{2}\Bigg)\nonumber\\
\le& \pp\Bigg(\varepsilon\le Z_{n+1}-y^*=Z_{[\delta n]}-y^*+\sum_{i=[\delta n]}^n \frac{h\left(\frac{\varepsilon}{2}+y^*\right)+\Delta M_{i+1}}{T_{i+1}},\nonumber\\
&\ \ \ \ \ \ \ \ \ \ \ \ \ \ \ \ \ Z_{[\delta n]}-y^*\le F, Z_{[\delta n]}-y^*\ge \frac{\varepsilon}{2}, \cdots, Z_n-y^*\ge\frac{\varepsilon}{2}\Bigg)\nonumber\\
\le& \pp\left(\varepsilon\le  F+\sum_{i=[\delta n]}^n \frac{h(\frac{\varepsilon}{2}+y^*)+\Delta M_{i+1}}{T_{i+1}}\right)\nonumber\\
\le& \pp\left(\sum_{i=[\delta n]}^n \frac{\Delta M_{i+1}}{T_{i+1}}\ge 2\varepsilon\right) \le e^{-a_1\delta n}.
\end{align}
Next we estimate the probability $\pp\Big(Z_{n+1}-y^*>\varepsilon,\left(B(\varepsilon)\right)^c\Big)$. Firstly, we know that
\begin{align*}
\pp\Big(Z_{n+1}-y^*>\varepsilon,\left(B(\varepsilon)\right)^c\Big)\le \sum_{k=[\delta n]}^n \pp(A_k(\varepsilon)).
\end{align*}
 By using the similar method as (\ref{p-6}), for all $n$ large enough and for every $k\ge [\delta n]$, we have
\beq\label{p-61}
\sum_{i=k+1}^n \frac{h\big(\frac{\varepsilon}{2}+y^*\big)}{T_{i+1}}\le \frac{h\left(\frac{\varepsilon}{2}+y^*\right)}{2\max\{H_1,H_2\}}\sum_{i=k+1}^n \frac{1}{i+1}\le-\frac{3\varepsilon}{4}.
\deq
From (\ref{a-1}), (\ref{p-61}), Lemma \ref{lem-1} and Lemma \ref{lem-2}, there exists a positive constant $a_2$, such that for all $n$ large enough, we have
\begin{align*}
\pp(A_k(\varepsilon))&=\pp\left(Z_k-y^*<\frac{\varepsilon}{2}, Z_{k+1}-y^*\ge \frac{\varepsilon}{2}, \cdots, Z_n-y^*\ge \frac{\varepsilon}{2}, Z_{n+1}-y^*\ge\varepsilon\right)\\
&=\pp\Bigg(Z_k-y^*<\frac{\varepsilon}{2},  Z_{k+1}-y^*+\sum_{i=k+1}^n \frac{h(Z_i)+\Delta M_{i+1}}{T_{i+1}} \ge\varepsilon,\ \\
&\ \ \ \ \ \ \ \ \ \ \ \ \ \ \ \   Z_{k+1}-y^*\ge \frac{\varepsilon}{2}, \cdots, Z_n-y^*\ge \frac{\varepsilon}{2}\Bigg)\\
&\le \pp\Bigg(Z_k-y^*<\frac{\varepsilon}{2},  Z_{k+1}-y^*+\sum_{i=k+1}^n \frac{h\big(\frac{\varepsilon}{2}+y^*\big)+\Delta M_{i+1}}{T_{i+1}} \ge\varepsilon,\ \\
&\ \ \ \ \ \ \ \ \ \ \ \ \ \ \ \   Z_{k+1}-y^*\ge \frac{\varepsilon}{2}, \cdots, Z_n-y^*\ge \frac{\varepsilon}{2}\Bigg)\\
&\le \pp\left(Z_k-y^*<\frac{\varepsilon}{2}, Z_{k+1}-y^*+\sum_{i=k+1}^n \frac{\Delta M_{i+1}}{T_{i+1}} \ge \frac{7\varepsilon}{4} \right)\\
&\le \pp\left(Z_{k+1}-y^{*}\le\frac{3}{4}\varepsilon, Z_{k+1}-y^*+\sum_{i=k+1}^n \frac{\Delta M_{i+1}}{T_{i+1}} \ge\frac{7\varepsilon}{4}\right)\\
&\le \pp\left( \sum_{i=k+1}^n \frac{\Delta M_{i+1}}{T_{i+1}}\ge \varepsilon\right)\le  e^{-a_2(k+1)}.
\end{align*}
Thus there exists a positive constant $a_3$, such that for all $n$ large enough, we have
\beq\label{p-4}
\pp\Big(Z_{n+1}-y^*>\varepsilon,\left(B(\varepsilon)\right)^c\Big)\le \sum_{k=[\delta n]}^n e^{-a_2(k+1)}\le  e^{-a_3 \delta n}.
\deq
At last, we estimate the probability $\pp\Big(Z_{[\delta n]}-y^*>F\Big)$. From the definition of $Z_n$, we know that $|Z_n|\le 1$. Hence, by taking $F>1+y^*$, we have
\begin{align}\label{p-5}
\pp\Big(Z_{[\delta n]}-y^*>F\Big)=0.
\end{align}
From (\ref{p-2}), (\ref{p-3}), (\ref{p-4}) and (\ref{p-5}), there exists a positive constant $a_4$, such that
$$
\pp\Big(Z_{n+1}-y^*>\varepsilon\Big)\le  e^{-a_4 n}.
$$

{\bf Step 2:} Next we shall obtain the inequality of $\dsp\pp\Big(Z_{n+1}-y^*<-\varepsilon\Big)$.
For any $\varepsilon>0$ and $\delta\in(0,1)$, let us define the following events:
$$
C(\varepsilon)=\left\{Z_{[\delta n]}-y^*\le -\frac{\varepsilon}{2}, \cdots, Z_n-y^*\le-\frac{\varepsilon}{2}, Z_{n+1}-y^*\le -\varepsilon\right\},
$$
$$
D^{(n)}_0(\varepsilon)=D_0(\varepsilon)=\left\{Z_1-y^{*}\le -\frac{\varepsilon}{2}, \cdots, Z_n-y^*\le -\frac{\varepsilon}{2}, Z_{n+1}-y^*\le-\varepsilon\right\},
$$
$$
\aligned
D^{(n)}_k(\varepsilon)=&D_k(\varepsilon)\\
=&\left\{Z_k-y^*>-\frac{\varepsilon}{2}, Z_{k+1}-y^*\le -\frac{\varepsilon}{2}, \cdots, Z_n-y^*\le -\frac{\varepsilon}{2}, Z_{n+1}-y^*\le-\varepsilon\right\},
\endaligned
$$
where $k=1, 2, \cdots, n$. Hence, for any $F>0$, we can get that
\beq\label{p-2-1}
\aligned
&\pp\Big(Z_{n+1}-y^*<-\varepsilon\Big)\\
=&\pp\Big(Z_{n+1}-y^*<-\varepsilon, C(\varepsilon)\Big)+\pp\Big(Z_{n+1}-y^*<-\varepsilon,\left(C(\varepsilon)\right)^c\Big)\\
\le& \pp\Big(Z_{[\delta n]}-y^*<-F\Big)+\pp\Big(Z_{[\delta n]}-y^*\ge- F, C(\varepsilon)\Big)+\pp\Big(Z_{n+1}-y^*<-\varepsilon,\left(C(\varepsilon)\right)^c\Big).
\endaligned
\deq
From the claim (\ref{l1}) and by using the condition that the function $h$ is non-increasing on $(0,1)$, then for any $\varepsilon>0$ small enough, we have $\dsp h\left(y^*-\frac{\varepsilon}{2}\right)>0$. Hence for any $F>0$, from (\ref{b-1}), we can choose $\delta=\delta(\varepsilon, F, y^*, H_1, H_2, h)$, such that for all $n$ large enough,
\beq\label{p-6-1}
F-\sum_{i=[\delta n]}^n \frac{h\big(y^*-\frac{\varepsilon}{2}\big)}{T_{i+1}}\le F-\frac{h\left(y^*-\frac{\varepsilon}{2}\right)}{2\max\{H_1,H_2\}}\sum_{i=[\delta n]}^n \frac{1}{i+1}\le-\varepsilon.
\deq
From (\ref{a-1}), (\ref{p-6-1}) and Lemma \ref{lem-1}, there exists a positive constant $a_1$, such that for all $n$ large enough, we have
\begin{align}\label{p-3-1}
&\pp\Big(Z_{[\delta n]}-y^*\ge -F, C(\varepsilon)\Big)\nonumber\\
=&\pp\left(Z_{[\delta n]}-y^*\ge -F, Z_{[\delta n]}-y^*\le -\frac{\varepsilon}{2}, \cdots, Z_n-y^*\le-\frac{\varepsilon}{2}, Z_{n+1}-y^*\le -\varepsilon \right)\nonumber\\
=& \pp\Bigg( Z_{n+1}-y^*=Z_{[\delta n]}-y^*+\sum_{i=[\delta n]}^n \frac{h(Z_i)+\Delta M_{i+1}}{T_{i+1}}\le-\varepsilon,\nonumber\\
&\ \ \ \ \ \ \ \ \ \ \ \ \ \ \ \ \ Z_{[\delta n]}-y^*\ge -F, Z_{[\delta n]}-y^*\le -\frac{\varepsilon}{2}, \cdots, Z_n-y^*\le-\frac{\varepsilon}{2}\Bigg)\nonumber\\
\le& \pp\Bigg( Z_{n+1}-y^*=Z_{[\delta n]}-y^*+\sum_{i=[\delta n]}^n \frac{h\left(y^*-\frac{\varepsilon}{2}\right)+\Delta M_{i+1}}{T_{i+1}}\le -\varepsilon,\nonumber\\
&\ \ \ \ \ \ \ \ \ \ \ \ \ \ \ \ \ Z_{[\delta n]}-y^*\ge-F, Z_{[\delta n]}-y^*\le -\frac{\varepsilon}{2}, \cdots, Z_n-y^*\le-\frac{\varepsilon}{2}\Bigg)\nonumber\\
\le& \pp\left( -F+\sum_{i=[\delta n]}^n \frac{h(y^*-\frac{\varepsilon}{2})+\Delta M_{i+1}}{T_{i+1}}\le-\varepsilon \right)\nonumber\\
\le& \pp\left(\sum_{i=[\delta n]}^n \frac{\Delta M_{i+1}}{T_{i+1}}\le -2\varepsilon\right) \le e^{-a_1\delta n}.
\end{align}
Next we estimate the probability $\pp\Big(Z_{n+1}-y^*<-\varepsilon,\left(C(\varepsilon)\right)^c\Big)$. Firstly, we know that
\begin{align*}
\pp\Big(Z_{n+1}-y^*<-\varepsilon,\left(C(\varepsilon)\right)^c\Big)\le \sum_{k=[\delta n]}^n \pp(D_k(\varepsilon)).
\end{align*}
 By using the similar method as (\ref{p-6-1}), for all $n$ large enough and for every $k\ge [\delta n]$, we have
\beq\label{p-61-1}
\sum_{i=k+1}^n \frac{h\big(y^*-\frac{\varepsilon}{2}\big)}{T_{i+1}}\ge \frac{h\left(y^*-\frac{\varepsilon}{2}\right)}{2\max\{H_1,H_2\}}\sum_{i=k+1}^n \frac{1}{i+1}\ge\frac{3\varepsilon}{4}.
\deq
From (\ref{a-1}), (\ref{p-61-1}), Lemma \ref{lem-1} and Lemma \ref{lem-2}, there exists a positive constant $a_2$, such that for all $n$ large enough, we have
\begin{align*}
\pp(D_k(\varepsilon))&=\pp\left(Z_k-y^*>-\frac{\varepsilon}{2}, Z_{k+1}-y^*\le -\frac{\varepsilon}{2}, \cdots, Z_n-y^*\le -\frac{\varepsilon}{2}, Z_{n+1}-y^*\le-\varepsilon\right)\\
&=\pp\Bigg(Z_k-y^*>-\frac{\varepsilon}{2},  Z_{k+1}-y^*+\sum_{i=k+1}^n \frac{h(Z_i)+\Delta M_{i+1}}{T_{i+1}} \le-\varepsilon,\ \\
&\ \ \ \ \ \ \ \ \ \ \ \ \ \ \ \   Z_{k+1}-y^*\le -\frac{\varepsilon}{2}, \cdots, Z_n-y^*\le -\frac{\varepsilon}{2}\Bigg)\\
&\le \pp\Bigg(Z_k-y^*>-\frac{\varepsilon}{2},  Z_{k+1}-y^*+\sum_{i=k+1}^n \frac{h\big(y^*-\frac{\varepsilon}{2}\big)+\Delta M_{i+1}}{T_{i+1}} \le-\varepsilon,\ \\
&\ \ \ \ \ \ \ \ \ \ \ \ \ \ \ \   Z_{k+1}-y^*\le -\frac{\varepsilon}{2}, \cdots, Z_n-y^*\le -\frac{\varepsilon}{2}\Bigg)\\
&\le \pp\left(Z_k-y^*>-\frac{\varepsilon}{2}, Z_{k+1}-y^*+\sum_{i=k+1}^n \frac{\Delta M_{i+1}}{T_{i+1}} \le -\frac{7\varepsilon}{4} \right)\\
&\le \pp\left(Z_{k+1}-y^{*}\ge-\frac{3}{4}\varepsilon, Z_{k+1}-y^*+\sum_{i=k+1}^n \frac{\Delta M_{i+1}}{T_{i+1}} \le-\frac{7\varepsilon}{4}\right)\\
&\le \pp\left( \sum_{i=k+1}^n \frac{\Delta M_{i+1}}{T_{i+1}}\le -\varepsilon\right)\le  e^{-a_2(k+1)}.
\end{align*}
Thus there exists a positive constant $a_3$, such that for all $n$ large enough, we have
\beq\label{p-4-1}
\pp\Big(Z_{n+1}-y^*<-\varepsilon,\left(C(\varepsilon)\right)^c\Big)\le \sum_{k=[\delta n]}^n e^{-a_2(k+1)}\le  e^{-a_3 \delta n}.
\deq
At last, we estimate the probability $\pp\Big(Z_{[\delta n]}-y^*<-F\Big)$. From the definition of $Z_n$, we know that $|Z_n|\le 1$. Hence, by taking $F>1+y^*$, we have
\begin{align}\label{p-5-1}
\pp\Big(Z_{[\delta n]}-y^*<-F\Big)=0.
\end{align}
From (\ref{p-2-1}), (\ref{p-3-1}), (\ref{p-4-1}) and (\ref{p-5-1}), there exists a positive constant $a_4$, such that
$$
\pp\Big(Z_{n+1}-y^*<-\varepsilon\Big)\le  e^{-a_4 n}.
$$
\end{proof}

\begin{proof}[{\bf Proof of Theorem \ref{thm-sa2}}] By using the similar method as the proof of Lemma \ref{lem-1}, for any $\varepsilon>0$, there exist positive constants $k_1$ and $a$, such that for all $k>k_1$  and all $n$ large enough, we have
 \beq\label{thm-sa2-p1}
\pp\left(\left|\sum_{i=k}^n \gamma_{i+1}U_{i+1}\right|\ge\varepsilon\right)\le 2e^{-ak}.
\deq
From the proof of Lemma \ref{lem-2}, for any $\varepsilon>0$, there exists a positive constant $k_2$, such that for all $k>k_2$, we have
\beq\label{thm-sa2-p2}
\left\{X_k-x^{*}\le \frac{1}{2} \varepsilon\right\}\subset \left\{X_{k+1}-x^{*}\le\frac{3}{4}\varepsilon\right\}
\deq
and
\beq\label{thm-sa2-p3}
\left\{X_k-x^{*}\ge- \frac{1}{2} \varepsilon\right\}\subset \left\{X_{k+1}-x^{*}\ge-\frac{3}{4}\varepsilon\right\}.
\deq
From (\ref{p-2}) and the similar proof of Theorem \ref{thm2-1}, for any $\varepsilon>0$, $\delta\in(0,1)$ and $F>0$, there exists a positive constant $a_1$, such that for all $n$ large enough, we have
\beq\label{thm-sa2-p4}
\aligned
\pp\Big(X_{n+1}-x^*>\varepsilon\Big)
\le \pp\Big(X_{[\delta n]}-x^*>F\Big)+ e^{-a_1n}.
\endaligned
\deq
Hence, in order to obtain the desired result, it is enough to estimate the term
$$\dsp\pp\Big(X_{[\delta n]}-x^*>F\Big).$$
Let $m=[\delta n]$, and for $k=1,2,\cdots,m-1$ let
$$
A^{(m)}_k(F):=\left\{X_k-x^*<\frac{F}{2}, X_{k+1}-x^*\ge \frac{F}{2}, \cdots, X_{m-1}-x^*\ge \frac{F}{2}, X_{m}-x^*\ge F\right\},
$$
and
$$
A^{(m)}_0(F):=\left\{X_1-x^*\ge \frac{F}{2}, X_{k+1}-x^*\ge \frac{F}{2}, \cdots, X_{m-1}-x^*\ge \frac{F}{2}, X_{m}-x^*\ge F\right\}.
$$
Then for $k=1,2,\cdots,m-1$, we have
\beq\label{thm-sa2-p5}
\aligned
&\pp\Big(A^{(m)}_k(F)\Big)\\
=&\pp\left(X_k-x^*<\frac{F}{2}, X_{k+1}-x^*\ge \frac{F}{2}, \cdots, X_{m-1}-x^*\ge \frac{F}{2}, X_{m}-x^*\ge F\right)\\
=&\pp\bigg(X_k-x^*<\frac{F}{2}, X_{k+1}-x^*\ge \frac{F}{2}, \cdots, X_{m-1}-x^*\ge \frac{F}{2}, \\
&\ \ \ \ \ \ \ \ \ \ \ \ \   \ \ \ \ \ \ \  X_{m}-x^*=X_{k+1}-x^*+\sum_{i=k+1}^{m-1}\gamma_{i+1}\big[g(X_{i})+U_{i+1}\big]\ge F\bigg)\\
\le &\pp\left(X_{k+1}-x^*+\sum_{i=k+1}^{m-1}\gamma_{i+1}\left[g\left(x^*+\frac{F}{2}\right)+U_{i+1}\right]\ge F, X_k-x^*<\frac{F}{2}\right).
\endaligned
\deq
On one hand, we have
\beq\label{thm-sa2-p6}
\aligned
\pp\Big(X_{m}-x^*>F\Big)\le\sum_{k=0}^{m-1}\pp\Big(A^{(m)}_k(F)\Big).
\endaligned
\deq
On the other hand, from (\ref{thm-sa2-p2}), there exists a positive constant $k_0:=\max\{k_1, k_2\}$, such that for all $k\ge k_0$, we have
\beq\label{thm-sa2-p7}
\aligned
\pp\Big(A^{(m)}_k(F)\Big)
\le \pp\left(\sum_{i=k+1}^{m-1}\gamma_{i+1}\left[g\left(x^*+\frac{F}{2}\right)+U_{i+1}\right]\ge \frac{F}{4}\right).
\endaligned
\deq

{\bf (1).} From the condition $\dsp\lim_{x\to+\infty}[-g(x)]=K_{gl}>K_u$, there exists a positive contant $F_0$ such that for $F>F_0$, we have
$$
g\left(x^*+\frac{F}{2}\right)+U_{i+1}<0,\ \ a.s.
$$
which, from (\ref{thm-sa2-p7}), implies that for any $k> k_0$, $\pp\Big(A^{(m)}_k(F)\Big)=0$. Furthermore, for $0\le k\le k_0$, it is easy to see that $X_k$ is bounded. Hence we have $\pp\Big(X_{m}-x^*>F\Big)=0$ for all $F$ large enough.

{\bf (2).} By using the inequality (\ref{thm-sa2-p4}), for any $F>0$, $0<\delta<\mu<1$ and $F_1>0$, there exists a positive constant $a_2$, such that for all $n$ large enough, we have
\beq\label{thm-sa2-p8}
\aligned
\pp\Big(X_{m}-x^*>F\Big)
\le \pp\Big(X_{[\mu m]}-x^*>F_1\Big)+ e^{-a_2n}.
\endaligned
\deq
For $k< k_0$, since $|X_k|\le |X_{k-1}|+\gamma_{k+1}|g(X_{k})|+\gamma_{k+1}|U_{k+1}|$, then there exists a positive constant $C$, such that
$|X_k-x^*|\le C$ a.s. for every $0\le k\le k_0$. From (\ref{thm-sa2-p5}), we get
\beq\label{thm-sa2-p9}
\pp\Big(A^{([\mu m])}_k(F_1)\Big)\le \pp\left(\sum_{i=k+2}^{[\mu m]}\frac{u_u}{i}U_{i}\ge F_1-C-u_u g\left(x^*+\frac{F_1}{2}\right)\sum_{i=k+2}^{[\mu m]}\frac{1}{i}\right).
\deq
For $k_0\le k< [\mu m]$, from (\ref{thm-sa2-p7}), we have
\beq\label{thm-sa2-p10}
\pp\Big(A^{([\mu m])}_k(F_1)\Big)\le
\pp\left(\sum_{i=k+2}^{[\mu m]}\frac{u_u}{i}U_{i}\ge \frac{F_1}{4}-u_u g\left(x^*+\frac{F_1}{2}\right)\sum_{i=k+2}^{[\mu m]}\frac{1}{i}\right).
\deq
In order to estimate the probabilities in (\ref{thm-sa2-p9}) and (\ref{thm-sa2-p10}), we need to give an exponential inequality with the following form:
$$
\pp\left(\sum_{i=k}^{n}\frac{U_{i}}{i}\ge B\sum_{i=k}^{n}\frac{1}{i}+D\right),
$$
where $D>0$, $|U_i|\le L$ a.s. for all $1\le i\le n$ and $0\le  B\le L$. Define
$$
k_n=\Big[e^{-4}e^{\frac{D}{L}}k^{1-\frac{B}{L}}n^{\frac{B}{L}}\Big].
$$
For any $\nu\in(0,1)$ small enough (which may depend on $L, B, D$), we can check that $k\le k_n$ for $k\le \nu n$. Hence from (\ref{thm-sa2-p1}) (in where if $\gamma_i$ is replaced by $i$, the inequality (\ref{thm-sa2-p1}) still holds), there exists a positive constant $a_3$, such that
\beq\label{thm-sa2-p11}
\aligned
&\pp\left(\sum_{i=k}^{n}\frac{U_{i}}{i}\ge B\sum_{i=k}^{n}\frac{1}{i}+D\right)\\
\le &\pp\left(\sum_{i=k}^{k_n}\frac{U_{i}}{i}\ge B\sum_{i=k}^{n}\frac{1}{i}+D-2L\right)
+\pp\left(\sum_{i=k_n+1}^{n}\frac{U_{i}}{i}\ge 2L\right)\\
=&\pp\left(\sum_{i=k_n+1}^{n}\frac{U_{i}}{i}\ge 2L\right)\le \exp\left(-a_3k^{1-\frac{B}{L}}n^{\frac{B}{L}}\right).
\endaligned
\deq
From (\ref{thm-sa2-p9}), (\ref{thm-sa2-p10}) and (\ref{thm-sa2-p11}), there exists a positive constant $a_4$, such that
$$
\pp\Big(X_{[\mu m]}-x^*>F_1\Big)\le \sum_{k=0}^{[\mu m]-1}\pp\Big(A^{([\mu m])}_k(F_1)\Big)\le \exp\left(-a_4n^{-\frac{g\left(x^*+\frac{F_1}{2}\right)}{K_u}}\right).
$$
By choosing $F_1$ so large that it satisfies
$$
-\frac{g\left(x^*+\frac{F_1}{2}\right)}{K_u}>\frac{K_{gl}}{K_u}-\delta,
$$
and from (\ref{thm-sa2-p8}), we have for all $n$ large enough
$$
\pp\Big(X_{m}-x^*>F\Big)\le\exp\left(-n^{\frac{K_{gl}}{K_u}-\delta}\right).
$$

By using similar method, the results in {\bf (3)} and {\bf (4)} can be proved.
\end{proof}

\end{document}